\newtheorem{thm}{Theorem}[section]
\newtheorem{cor}[thm]{Corollary}
\newtheorem{lem}[thm]{Lemma}
\newtheorem{prop}[thm]{Proposition}
\theoremstyle{definition}
\newtheorem{defn}[thm]{Definition}
\newtheorem{rem}[thm]{Remark}
\numberwithin{equation}{section}
\begin{document}
\normalsize
\baselineskip=30pt

\title[ infinite products using multiplicative modulus function ]{N\lowercase{ovel} A\lowercase{pproach} \lowercase{to} I\lowercase{nfinite} P\lowercase{roducts} U\lowercase{sing} M\lowercase{ultiplicative} M\lowercase{odulus} F\lowercase{unction}}

\author{C. G\lowercase{anesa} M\lowercase{oorthy}}
\address{Department of Mathematics\\ Alagappa University\\
Karaikudi-630 004, India}
\email{ganesamoorthyc@gmail.com}

\date{}

\begin{abstract}
\ The usual nonnegative modulus function is based on addition. A natural different modulus function on the set of positive reals is introduced. Arguments for results for series through the usual modulus function are transformed to arguments for results for infinite products through the new modular function for multiplication. Counterparts for Riemann rearrangement theorem and some tests for convergence are derived. These counterparts are completely new results and they are different from classical results for infinite products.
\end{abstract}

\subjclass[2010]{Primary 40A20; Secondary 40A05}

\keywords{ Infinite series, Infinite products, Nets.}
\maketitle

\section{Introduction}
\ After giving the definition of convergence of infinite products of complex numbers, the following two results can be derived (see, e.g., \cite{a1}).
$(\textrm{i})$ For a sequence $(a_{n})_{n=1}^{\infty}$ of positive reals, $\prod\limits_{n=1}^{\infty} (1+a_{n})$ converges if and only if $\sum\limits_{n=1}^{\infty}a_{n} $ converges, and if and only if $\prod\limits_{n=1}^{\infty} (1- a_{n})$ converges. $(\textrm{ii})$ For a sequence $(a_{n})_{n=1}^{\infty}$ of complex numbers, $\prod\limits_{n=1}^{\infty} (1+a_{n})$ converges whenever $\prod\limits_{n=1}^{\infty} (1+|a_{n}|)$ converges. The convergence of $\prod\limits_{n=1}^{\infty} (1+|a_{n}|)$ in the second result is called absolute convergence of $\prod\limits_{n=1}^{\infty} (1+a_{n})$. However, our expectation is that the absolute convergence of $\prod\limits_{n=1}^{\infty} (1+a_{n})$ should mean the convergence of $\prod\limits_{n=1}^{\infty} |1+a_{n}|$. This expectation is to be fulfilled by introducing a natural modulus function for multiplication. The results to be obtained for infinite products use transformed arguments of arguments used for results for infinite series. A counterpart of Riemann's rearrangement theorem is to be derived. Counterparts of tests for convergence of series are to be derived for infinite products. Tests for convergence provide necessary conditions and sufficient conditions for convergence.

\ There is a natural procedure to convert problems in infinite products to problems in infinite series. This procedure is the one which uses the exponential function and the natural logarithmic function for transformation. For example, convergence of $\prod\limits_{n=1}^{\infty} a_{n}$ is equivalent to convergence of $\sum\limits_{n=1}^{\infty} \log a_{n}$, and convergence of $\sum\limits_{n=1}^{\infty} a_{n}$ is equivalent to convergence of $\prod\limits_{n=1}^{\infty} e^{a_{n}}$, formally. This natural procedure is not to be applied directly in deriving results for infinite products. However this transformation is to be applied indirectly in arguments for results to be derived. Only infinite products with positive factors will be considered. So, the following simplified definition will be adopted.

\begin{defn}
\ Let $(a_{n})_{n=1}^{\infty}$ be a sequence of positive reals. Let $ p_{n}=\prod\limits_{k=1}^{n}a_{k} $, for every $n=1,2,3,...$.If $p_{n}\rightarrow p$ for some real $p>0$ as $ n \rightarrow \infty $, then it is said that the infinite product $\prod\limits_{n=1}^{\infty} a_{n}$ converges to $p$ and it is written as $p=\prod\limits_{n=1}^{\infty} a_{n}$. Otherwise, it is said that $\prod\limits_{n=1}^{\infty} a_{n}$ does not converge.
\end{defn}

\begin{rem}
\ The cases $p=0$ and $p=+\infty$ are not included for convergence. A necessary condition for convergence is $a_{n} \rightarrow 1$ as $n\rightarrow\infty$, but this is not sufficient. A necessary and sufficient condition is given in the following known theorem. See \cite{a1}.
\end{rem}

\begin{thm}
\ An infinite product $\prod\limits_{n=1}^{\infty} a_{n}$, with $a_{n}>0$, $\forall n$, converges if and only if for every $\epsilon >0$, there exists an integer $n_{0}$ such that $|a_{m}a_{m+1}...a_{n}-1|<\epsilon$, whenever $n>m\geq n_{0}$.
\end{thm}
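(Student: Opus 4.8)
The plan is to reduce the statement to the ordinary Cauchy criterion for sequences of real numbers applied to the sequence of partial products $p_n=\prod_{k=1}^{n}a_k$, by exploiting the algebraic identity $a_m a_{m+1}\cdots a_n=p_n/p_{m-1}$ for $n>m\ge 1$, where we use the convention $p_0=1$. Thus the quantity $|a_m\cdots a_n-1|$ appearing in the statement is exactly $|p_n-p_{m-1}|/p_{m-1}$, and the whole argument amounts to translating ``the partial products converge to a positive limit'' into ``the partial products are Cauchy and uniformly bounded below by a positive constant.''

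For the necessity direction I would assume $p_n\to p$ with $p>0$. Then $(p_n)$ is a convergent sequence of positive reals with strictly positive limit, so there is a constant $c>0$ with $p_n\ge c$ for all $n$, and hence $1/p_{m-1}\le 1/c$. Given $\epsilon>0$, the Cauchy property of the convergent sequence $(p_n)$ supplies an integer $n_0$ with $|p_n-p_{m-1}|<c\,\epsilon$ whenever $n>m\ge n_0$, and then
\[
|a_m a_{m+1}\cdots a_n-1|=\left|\frac{p_n}{p_{m-1}}-1\right|=\frac{|p_n-p_{m-1}|}{p_{m-1}}<\epsilon .
\]

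For the sufficiency direction I would first squeeze boundedness out of the hypothesis: applying it with $\epsilon=\tfrac12$ yields an integer $n_1$ such that $\tfrac12<a_{n_1}a_{n_1+1}\cdots a_n<\tfrac32$ for all $n>n_1$, i.e.\ $\tfrac12 p_{n_1-1}<p_n<\tfrac32 p_{n_1-1}$; since the finitely many remaining $p_n$ are positive as well, there are constants $0<\alpha\le\beta$ with $\alpha\le p_n\le\beta$ for all $n$. Now, given any $\epsilon>0$, apply the hypothesis to $\epsilon/\beta$ to get $n_0$ so that, for $n>m\ge n_0$, $|p_n-p_{m-1}|=p_{m-1}\,|a_m\cdots a_n-1|<\beta\cdot(\epsilon/\beta)=\epsilon$. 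Hence $(p_n)$ is Cauchy and converges to some real $p$, and the lower bound $p_n\ge\alpha$ forces $p\ge\alpha>0$, so $\prod_{n=1}^{\infty}a_n$ converges in the sense of the Definition.

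The only genuinely delicate point is the exclusion of the degenerate ``limits'' $p=0$ and $p=+\infty$: these are ruled out precisely by the uniform two‑sided bounds extracted from the $\epsilon=\tfrac12$ instance of the Cauchy condition. Everything else is routine bookkeeping with the identity $a_m\cdots a_n=p_n/p_{m-1}$, plus minor care with the boundary case $m=1$, handled by the convention $p_0=1$.
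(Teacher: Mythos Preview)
Your proof is correct. The paper does not give a standalone proof of this theorem (it is stated as known, with a reference to Apostol), but it remarks that the required arguments can be found in the proof of Lemma~4.1 on unordered products; the structure there---first extracting uniform two-sided bounds on the partial products from a fixed-$\epsilon$ instance of the Cauchy condition (the paper uses $\epsilon=\tfrac14$ where you use $\tfrac12$), and then using those bounds to pass between the multiplicative and the additive Cauchy conditions via $a_m\cdots a_n=p_n/p_{m-1}$---is exactly the route you take.
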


\begin{rem}
\ Let $1<p(1)<p(2)<$... be a sequence of integers such that $p(n+1)-p(n)\leq M$, for some finite $M>0$. If $a_{1}a_{2}...a_{p(1)}a_{p(2)+1} a_{p(2)+2}...a_{p(3)}a_{p(4)+1}...a_{p(2k)+1}...a_{p(2k+1)}$... and $a_{p(1)+1}a_{p(1)+2}...a_{p(2)}a_{p(3)+1}a_{p(3)+2}...a_{p(4)}a_{p(5)+1}...a_{p(2k+1)+1}...a_{p(2k+2)}$...converge to $r$ and $s$, then $a_{1}a_{2}$... converges to $rs$. This result can be derived by using Theorem 1.3. This can be done by modifying the arguments of Theorem 8.14 in \cite{a1}. For example, if $(b_{n})_{n=1}^{\infty}$ and $(c_{n})_{n=1}^{\infty}$ are sequences of positive numbers such that $\prod\limits_{n=1}^{\infty} b_{n}$ and $\prod\limits_{n=1}^{\infty} c_{n}$ converge, then $b_{1}c_{1}b_{2}c_{2}b_{3}c_{3}$... and $b_{1}c_{1}^{-1}b_{2}c_{2}^{-1}b_{3}c_{3}^{-1}$... converge.
\end{rem}

\section{Modulus Function for Multiplication:}
\ For a given real number $x$, let
\begin{equation*}
|x|_{+}=max \{x,-x \}= \begin{cases}
\ x & \text{if $ x\geq 0$}\\
 -x & \text{if $ x\leq 0 $}.
\end{cases}
\end{equation*}
\ Here $|x|_{+}$ is the usual absolute value $|x|$. Let us call $|$ $|_{+}$ as additive absolute value function (or additive modulus function). Observe that $0$ is the additive identity in the additive group of real numbers and $-x$ is the additive inverse of $x$. Observe also that $1$ is the multiplicative identity in the multiplicative group of positive numbers, and $x^{-1}$ is the multiplicative inverse of $x>0$.

\begin{defn}
\ For a given positive real number $x$, let us define
\begin{equation*}
|x|_{\times}=max \{x,x^{-1} \}= \begin{cases}
\ x & \text{if $ x\geq 1 $}\\
 x^{-1} & \text{if $ x\leq 1 $}.
\end{cases}
\end{equation*}
\ Let us call $|$ $ |_\times$ as multiplicative absolute value function (or multiplicative modulus function). Here $|x|_\times ^{-1} \leq x \leq |x|_\times$. Also, $|xy|_\times \leq |x|_\times |y|_\times$ for $x>0$ and $y>0$.
\end{defn}

\ Positive and negative parts corresponding to multiplication can also be defined with an understanding that numbers in $(0,1)$ are multiplicative negative numbers and that number in $(1,+\infty)$ are multiplicative positive numbers.

\begin{lem}
\ For a given $x>0$, let $p=(|x|_\times x)^{1/2}$, $q=\Big(\frac{|x|_\times}{x}\Big)^{1/2}$ . Then $\frac{p}{q}=x$, $pq=|x|_\times$, $p\geq 1$ and $q \geq 1$.
\end{lem}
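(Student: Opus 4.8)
The plan is to verify the two identities and the two inequalities by a direct computation, using nothing beyond the definition of $|\cdot|_\times$ and the elementary algebra of square roots of positive reals.

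First I would note that, since $x>0$, we have $|x|_\times=\max\{x,x^{-1}\}>0$ and also $|x|_\times/x>0$, so $p=(|x|_\times x)^{1/2}$ and $q=(|x|_\times/x)^{1/2}$ are well-defined positive reals. The first two claims then follow from the multiplicativity of $t\mapsto t^{1/2}$ on the positive reals: $p/q=\big((|x|_\times x)/(|x|_\times/x)\big)^{1/2}=(x^{2})^{1/2}=x$, the last equality using $x>0$; and $pq=\big((|x|_\times x)(|x|_\times/x)\big)^{1/2}=(|x|_\times^{2})^{1/2}=|x|_\times$, the last equality using $|x|_\times>0$.

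For $p\ge 1$ and $q\ge 1$ I would invoke the inequality $|x|_\times^{-1}\le x\le|x|_\times$ recorded in Definition 2.1. The right half $x\le|x|_\times$ gives $q^{2}=|x|_\times/x\ge 1$, hence $q\ge 1$ since $q>0$; the left half $|x|_\times^{-1}\le x$ gives $p^{2}=x\,|x|_\times\ge 1$, hence $p\ge 1$ since $p>0$. One could equally split into the cases $x\ge 1$ and $0<x\le 1$ straight from the definition of $|x|_\times$: in the first case $q=1$ and $p=x$, in the second $p=1$ and $q=x^{-1}$.

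I do not anticipate a real obstacle: the lemma is a bookkeeping statement that writes $x$ as a quotient $p/q$ of two numbers each $\ge 1$ --- the ``multiplicative positive and negative parts'' of $x$ mentioned just before the lemma --- in analogy with the decomposition of a real number into its ordinary positive and negative parts. The one point deserving care is to keep the positivity hypotheses in view, so that passing to square roots is unambiguous and so that $(x^{2})^{1/2}$ equals $x$ rather than $|x|$.
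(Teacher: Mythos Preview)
Your proof is correct and is precisely the ``direct verification'' that the paper invokes as its entire proof; you have simply made that verification explicit. There is nothing to add.
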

\begin{proof}
\ Direct verification.
\end{proof}

\ The numbers $p$ and $q$ may be considered as multiplicative positive part and multiplicative negative part of $x$ in Lemma 2.2.

\begin{defn}
\ Let $(a_{n})_{n=1}^{\infty}$ be a sequence of positive numbers. Then $\prod\limits_{n=1}^{\infty} a_{n}$ is said to converge  m-absolutely, if $\prod\limits_{n=1}^{\infty} |a_{n}|_\times$ converges.
\end{defn}

\begin{lem}
\ Let $(a_{n})_{n=1}^{\infty}$ be a sequence of positive numbers such that $\prod\limits_{n=1}^{\infty} |a_{n}|_\times$ converges. Then  $\prod\limits_{n=1}^{\infty} a_{n}$ converges and $\Big(\prod\limits_{n=1}^{\infty} |a_{n}|_\times\Big)^{-1} \leq \prod\limits_{n=1}^{\infty} a_{n} \leq \prod\limits_{n=1}^{\infty} |a_{n}|_\times$.
\end{lem}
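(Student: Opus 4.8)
The plan is to imitate the classical argument that an absolutely convergent series converges, replacing the additive splitting $a=a^{+}-a^{-}$ by the multiplicative splitting furnished by Lemma 2.2. For each $k$ I would set $u_{k}=(|a_{k}|_{\times}a_{k})^{1/2}$ and $v_{k}=(|a_{k}|_{\times}/a_{k})^{1/2}$, the multiplicative positive and negative parts of $a_{k}$. By Lemma 2.2 we have $u_{k}\ge 1$, $v_{k}\ge 1$, $a_{k}=u_{k}/v_{k}$ and $u_{k}v_{k}=|a_{k}|_{\times}$; moreover, from the two-sided bound $|a_{k}|_{\times}^{-1}\le a_{k}\le |a_{k}|_{\times}$ in Definition 2.1 one checks at once that $u_{k}\le|a_{k}|_{\times}$ and $v_{k}\le|a_{k}|_{\times}$.

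Next I would pass to partial products. Write $U_{n}=\prod_{k=1}^{n}u_{k}$, $V_{n}=\prod_{k=1}^{n}v_{k}$, $Q_{n}=\prod_{k=1}^{n}|a_{k}|_{\times}$, and $Q=\prod_{k=1}^{\infty}|a_{k}|_{\times}$, which exists by hypothesis. Since every factor is $\ge 1$, the sequences $(U_{n})$, $(V_{n})$, $(Q_{n})$ are non-decreasing; and from $1\le U_{n}\le Q_{n}\le Q$ and $1\le V_{n}\le Q_{n}\le Q$ the bounded monotone sequences $(U_{n})$ and $(V_{n})$ converge to limits $u,v\in[1,Q]$, both strictly positive. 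Then $p_{n}:=\prod_{k=1}^{n}a_{k}=U_{n}/V_{n}\to u/v=:p>0$, so by Definition 1.1 the product $\prod_{n=1}^{\infty}a_{n}$ converges to $p$. Letting $n\to\infty$ in $Q_{n}=U_{n}V_{n}$ gives $Q=uv$. Finally, from $u\le Q$ and $v\ge 1$ we get $p=u/v\le u\le Q$, and from $v\le Q$ and $u\ge 1$ we get $p=u/v\ge 1/v\ge 1/Q$, which is exactly the asserted chain $Q^{-1}\le\prod_{n=1}^{\infty}a_{n}\le Q$.

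I do not expect a serious obstacle; the only points that need care are (i) deducing $u_{k},v_{k}\le|a_{k}|_{\times}$ from the inequality in Definition 2.1, and (ii) making sure the limits $u,v$ are strictly positive — automatic since $u_{k},v_{k}\ge 1$ — so that Definition 1.1 (which excludes the limits $0$ and $+\infty$) genuinely applies and the identity $\lim p_{n}=\lim U_{n}/\lim V_{n}$ is legitimate. If one prefers to bypass Lemma 2.2, an alternative route uses the Cauchy criterion: convergence of $\prod|a_{n}|_{\times}$ gives, for each $\epsilon>0$, an $n_{0}$ with $1\le |a_{m}|_{\times}\cdots|a_{n}|_{\times}<1+\epsilon$ whenever $n>m\ge n_{0}$ (the lower bound because each factor is $\ge 1$); multiplying $|a_{k}|_{\times}^{-1}\le a_{k}\le|a_{k}|_{\times}$ over $m\le k\le n$ then forces $(1+\epsilon)^{-1}<a_{m}\cdots a_{n}<1+\epsilon$, hence $|a_{m}\cdots a_{n}-1|<\epsilon$, so Theorem 1.3 yields convergence of $\prod a_{n}$, and the displayed inequality follows by letting $n\to\infty$ in $\prod_{k=1}^{n}|a_{k}|_{\times}^{-1}\le\prod_{k=1}^{n}a_{k}\le\prod_{k=1}^{n}|a_{k}|_{\times}$.
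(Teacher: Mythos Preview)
Your argument is correct. The primary route you take---splitting each factor via Lemma~2.2 into multiplicative positive and negative parts $u_{k},v_{k}\ge 1$, then invoking monotone convergence for the bounded nondecreasing sequences $(U_{n})$ and $(V_{n})$---is a genuinely different strategy from the paper's. The paper proceeds exactly as in your ``alternative route'': it writes the sandwich $\prod_{k=n}^{m}|a_{k}|_{\times}^{-1}\le \prod_{k=n}^{m}a_{k}\le \prod_{k=n}^{m}|a_{k}|_{\times}$ for $m>n$ and appeals directly to the Cauchy criterion of Theorem~1.3, then reads off the displayed inequality by taking $n=1$ and letting $m\to\infty$. Your decomposition approach is slightly longer but yields extra structural information (the limit factorises as $u/v$ with $u,v\in[1,Q]$ and $uv=Q$), paralleling the additive proof that absolute convergence of a series forces convergence of the series of positive parts and of negative parts separately; the paper's approach is shorter and avoids Lemma~2.2 entirely. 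Both are clean, and since you already sketched the Cauchy-criterion variant, you have in fact recovered the paper's proof as well.
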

\begin{proof}
\ Note that for $m>n$,
\begin{eqnarray*}
 \prod\limits_{k=n}^{m} \frac{1}{|a_{k}|_\times} \leq \prod\limits_{k=n}^{m} a_{k} \leq \prod\limits_{k=n}^{m} |a_{k}|_\times.
\end{eqnarray*}
By Theorem 1.3, $\prod\limits_{n=1}^{\infty} a_{n}$ converges and
\begin{eqnarray*}
0<\Big(\prod\limits_{n=1}^{\infty} |a_{n}|_\times\Big)^{-1} \leq \prod\limits_{n=1}^{\infty} a_{n} \leq \prod\limits_{n=1}^{\infty} |a_{n}|_\times < \infty.
\end{eqnarray*}
\end{proof}
\ Lemma 2.4 may be roughly stated as ``$m$-absolute convergence implies convergence".

\begin{lem}
\ Let $(a_{n})_{n=1}^{\infty}$ be a sequence of positive reals converging to a positive number $K$. Let $(t_{n})_{n=1}^{\infty}$ be a sequence of positive reals such that $\sum\limits_{n=1}^{\infty} t_{n}= +\infty$. Then $(a_{1}^{t_{1}}a_{2}^{t_{2}}...a_{n}^{t_{n}})^\frac{1}{t_{1}+t_{2}+...+t_{n}} \rightarrow K$ as $n \rightarrow \infty$.
\end{lem}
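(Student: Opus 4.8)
The plan is to transcribe, in the multiplicative language of Definition~2.1, the classical fact that a weighted Ces\`aro average of a convergent sequence converges to the same limit. Write $T_{n}=t_{1}+t_{2}+\cdots+t_{n}$ and $G_{n}=(a_{1}^{t_{1}}a_{2}^{t_{2}}\cdots a_{n}^{t_{n}})^{1/T_{n}}$, and put $r_{k}=a_{k}/K$, so that $r_{k}\to 1$ and $G_{n}=K\cdot\big(\prod_{k=1}^{n}r_{k}^{t_{k}}\big)^{1/T_{n}}$. It then suffices to show that $P_{n}:=\big(\prod_{k=1}^{n}r_{k}^{t_{k}}\big)^{1/T_{n}}\to 1$; and since $|P_{n}|_{\times}^{-1}\le P_{n}\le|P_{n}|_{\times}$ with $|P_{n}|_{\times}\ge 1$, this reduces to proving $|P_{n}|_{\times}\to 1$.

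Fix $c>1$. As $r_{k}\to 1$, choose $N$ with $|r_{k}|_{\times}<c$, i.e. $c^{-1}<r_{k}<c$, for all $k\ge N$. Split $\prod_{k=1}^{n}r_{k}^{t_{k}}=A\cdot\prod_{k=N}^{n}r_{k}^{t_{k}}$, where $A=\prod_{k=1}^{N-1}r_{k}^{t_{k}}$ is a fixed positive constant independent of $n$ (an empty product, hence $A=1$, if $N=1$). Thus $P_{n}=A^{1/T_{n}}\cdot B_{n}$ with $B_{n}=\big(\prod_{k=N}^{n}r_{k}^{t_{k}}\big)^{1/T_{n}}$. Since $t_{k}>0$ and $c^{-1}<r_{k}<c$ for $k\ge N$, we get $c^{-(t_{N}+\cdots+t_{n})}\le\prod_{k=N}^{n}r_{k}^{t_{k}}\le c^{\,t_{N}+\cdots+t_{n}}$, and as $0\le (t_{N}+\cdots+t_{n})/T_{n}\le 1$ this gives $c^{-1}\le B_{n}\le c$, i.e. $|B_{n}|_{\times}\le c$ for every $n\ge N$. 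Meanwhile the hypothesis $\sum_{n=1}^{\infty}t_{n}=+\infty$ gives $T_{n}\to+\infty$, so $A^{1/T_{n}}=\exp\big((\log A)/T_{n}\big)\to 1$, and in particular $|A^{1/T_{n}}|_{\times}<c$ for all large $n$. Applying the submultiplicativity $|xy|_{\times}\le|x|_{\times}|y|_{\times}$ from Definition~2.1, we obtain $|P_{n}|_{\times}\le|A^{1/T_{n}}|_{\times}\,|B_{n}|_{\times}<c^{2}$ for all sufficiently large $n$, hence $\limsup_{n\to\infty}|P_{n}|_{\times}\le c^{2}$. Since $c>1$ was arbitrary, $\limsup_{n\to\infty}|P_{n}|_{\times}\le 1$, so $|P_{n}|_{\times}\to 1$; therefore $P_{n}\to 1$ and $G_{n}=K\,P_{n}\to K$.

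The only point needing care is the splitting step together with the bound $|B_{n}|_{\times}\le c$: one must notice that raising to the power $1/T_{n}$ shrinks the ``bad'' exponent $t_{N}+\cdots+t_{n}$ to a quantity at most $1$, so the tail contributes no more than the factor $c$, while the fixed head $A$ is neutralized by $T_{n}\to+\infty$. There is no real obstacle here; this is exactly the multiplicative counterpart of the Toeplitz/weighted-Ces\`aro argument that the paper's framework is designed to carry out. (Equivalently, one could take logarithms and invoke the classical weighted Ces\`aro mean theorem, but the argument above stays inside the multiplicative setting.)
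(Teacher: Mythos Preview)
Your proof is correct and follows essentially the same route as the paper's: reduce to $K=1$, split the weighted product into a fixed head and a tail whose factors satisfy $|r_k|_\times<c$, bound the tail by $c$ using $(t_N+\cdots+t_n)/T_n\le 1$, and kill the head via $T_n\to\infty$. The only cosmetic differences are that the paper bounds the head by $M^{(t_1+\cdots+t_{k})/T_n}$ with $M=\sup_k|a_k|_\times$ instead of writing it as $A^{1/T_n}$, and that your use of $|P_n|_\times$ packages the upper and lower bounds together more cleanly than the paper's one-sided inequality.
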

\begin{proof}
\ Without loss of generality, let us assume that $K=1$. Let $\epsilon >0$ be given. Find an integer $k$ such that $|a_{n}|_\times <(1+\epsilon),$ $\forall n \geq k$. Find $M >0$ such that $|a_{n}|_\times \leq M < \infty$, $\forall n$. Then
\begin{eqnarray*}
1 \leq (a_{1}^{t_{1}}a_{2}^{t_{2}}...a_{n}^{t_{n}})^\frac{1}{t_{1}+t_{2}+...+t_{n}} \leq (|a_{1}|_\times^{t_{1}}...|a_{k}|_\times^{t_{k}})^\frac{1}{t_{1}+t_{2}+...+t_{n}} (|a_{k+1}|_\times^{t_{k+1}}...|a_{n}|_\times^{t_{n}})^\frac{1}{t_{1}+t_{2}+...+t_{n}}\\
\leq M^\frac{t_{1}+...+t_{k}}{t_{1}+t_{2}+...+t_{n}} (1+\epsilon)^\frac{t_{k+1}+...+t_{n}}{t_{1}+t_{2}+...+t_{n}}.
\end{eqnarray*}
Then, the right hand side tends to $1+\epsilon$ as $n \rightarrow \infty$, for every $\epsilon >0$. Thus, $a_{1}^{t_{1}}a_{2}^{t_{2}}...a_{n}^{t_{n}} \rightarrow 1 $ as $n \rightarrow \infty$. This proves the result.
\end{proof}

\section{Rearrangements}
\ The first Theorem 3.1 is a transformed version of the classical Riemann rearrangement theorem. Let us use the word ``rearrangement" in the usual sense.
\begin{thm}
\ Let $(a_{n})_{n=1}^{\infty}$ be a sequence of positive real numbers such that $\prod\limits_{n=1}^{\infty} a_{n}$ converges and $\prod\limits_{n=1}^{\infty} |a_{n}|_\times$ does not converge. Suppose $0 \leq \alpha \leq \beta \leq \infty$. Then there exists a rearrangement $\prod\limits_{n=1}^{\infty} a_{n}'$ with partial products $u_{n}'=\prod\limits_{k=1}^{n} a_{k}'$ such that $\liminf\limits_{n\rightarrow \infty} u_{n}'=\alpha$ and $\limsup\limits_{n \rightarrow \infty} u_{n}'=\beta $.
\end{thm}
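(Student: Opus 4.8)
The plan is to transcribe, into multiplicative language, the classical construction behind Riemann's rearrangement theorem; the logarithm/exponential dictionary only guides the argument. List as $(r_i)_{i\ge1}$ the terms of $(a_n)$ that exceed $1$, in their original order, and as $(s_j)_{j\ge1}$ the terms that are less than $1$; terms equal to $1$ may be dropped (and reinserted anywhere at the end) since they do not change any partial product. The first step is to observe that the hypotheses force $\prod_{i=1}^{\infty}r_i=+\infty$, $\prod_{j=1}^{\infty}s_j=0$, and in particular that there are infinitely many terms of each kind. Indeed, if $A_N$ and $B_N$ denote the product of the $>1$ terms and the product of the $<1$ terms occurring among $a_1,\dots,a_N$, then $A_NB_N=\prod_{k\le N}a_k\to p>0$ while $A_N/B_N=\prod_{k\le N}|a_k|_\times$; should $A_N$ remain bounded, $B_N$ would be bounded away from $0$ and $\prod|a_n|_\times$ would converge, contrary to hypothesis, so $A_N\to\infty$, whence $B_N=(A_NB_N)/A_N\to0$. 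Also $\prod a_n$ convergent gives $a_n\to1$ by Remark 1.2, so $r_i\to1$ and $s_j\to1$. These are precisely the multiplicative forms of the facts that, for a conditionally convergent real series, the positive part and the negative part both diverge while the terms tend to $0$.

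Now fix sequences $\alpha_n\to\alpha$ and $\beta_n\to\beta$ with $0<\alpha_n\le\beta_n<\infty$ for every $n$, and build $\prod a_n'$ in blocks $n=1,2,\dots$ as follows. In block $n$, first append the next still-unused $r_i$ and then keep appending unused $r_i$'s, one at a time in order, until the running partial product first exceeds $\beta_n$; then append the next still-unused $s_j$ and keep appending unused $s_j$'s until the running partial product first falls below $\alpha_n$. Since $\prod r_i=+\infty$ and $\prod s_j=0$, each block closes after finitely many steps, and since block $n$ uses at least one fresh term of each kind, every $r_i$ and every $s_j$ is used exactly once (the discarded $1$'s being tacked on at the end); thus $\prod a_n'$ is a genuine rearrangement, with partial products $u_n'$.

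It remains to identify $\liminf u_n'$ and $\limsup u_n'$. Let $\rho_n\ge1$ be the last $r_i$ used in block $n$ and $\sigma_n\le1$ the last $s_j$ used in block $n$; because the used indices increase without bound, $\rho_n\to1$ and $\sigma_n\to1$. At the step in block $n$ where the partial product first exceeds $\beta_n$ its value lies in $\bigl(\beta_n,\ \max(\beta_n,\alpha_{n-1})\,\rho_n\bigr]$ (the second alternative occurring only if that step is the forced one), and since $\alpha_{n-1}\to\alpha\le\beta$ and $\beta_n\to\beta$, these ``peaks'' tend to $\beta$; symmetrically the ``valleys'' lie in $\bigl[\alpha_n\sigma_n,\ \alpha_n\bigr)$ and tend to $\alpha$. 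Between a valley and the next peak the partial products increase monotonically, and between a peak and the next valley they decrease monotonically, so for each $\varepsilon>0$ all $u_n'$ beyond some point lie between $(1-\varepsilon)\alpha$ and $(1+\varepsilon)\beta$; hence $\liminf u_n'\ge\alpha$ and $\limsup u_n'\le\beta$, while the peaks and valleys give the reverse inequalities. Therefore $\liminf u_n'=\alpha$ and $\limsup u_n'=\beta$.

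I expect the main obstacle to be the bookkeeping: choosing $\alpha_n,\beta_n$ with $0<\alpha_n\le\beta_n<\infty$ and phrasing the ``at least one fresh term'' clause so that the rearrangement is simultaneously a bijection and ``tight'', and making the overshoot estimates uniform enough to cover the extreme cases $\alpha=0$, $\beta=\infty$, $\alpha=\beta$ (for instance taking $\alpha_n=\beta_n=1/n$ when $\alpha=\beta=0$). A shortcut consistent with the paper's policy of invoking the exponential transformation only inside proofs is to put $b_n=\log a_n$; then $\sum b_n$ converges while $\sum|b_n|=\sum\log|a_n|_\times=+\infty$, so the classical Riemann rearrangement theorem provides a rearrangement of $(b_n)$ whose partial sums have $\liminf=\log\alpha$ and $\limsup=\log\beta$ in $[-\infty,\infty]$; exponentiating and using the continuity and monotonicity of $\exp$, with $e^{-\infty}=0$ and $e^{+\infty}=+\infty$, yields the statement. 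In either route the whole mechanism is that the overshoots tend to $1$ because the factors $a_n$ tend to the multiplicative identity.
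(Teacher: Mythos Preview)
Your proof is correct and follows essentially the same route as the paper's: separate the factors into those $\ge 1$ and those $<1$, show both subproducts diverge (the paper does this via the multiplicative positive/negative parts $p_n,q_n$ of Lemma~2.2, you via the identity $A_N/B_N=\prod_{k\le N}|a_k|_\times$ together with $A_NB_N\to p$, which amounts to the same thing), and then run the standard Riemann construction with overshoot controlled by $a_n\to 1$. The only cosmetic differences are your explicit ``forced fresh term'' clause to guarantee a bijection and your handling of factors equal to $1$ (note: if there are infinitely many such factors, ``reinserted at the end'' should be read as ``interleaved anywhere'', which leaves $\liminf$ and $\limsup$ unchanged).
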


\begin{proof}
\ Let $p_{n}=(|a_{n}|_\times a_{n})^{1/2}$ and $q_{n}=\Big(\frac{|a_{n}|_\times}{a_{n}}\Big)^{1/2}$, for every $n$. By Lemma 2.2, $\frac{p_{n}}{q_{n}}=a_{n}, p_{n}q_{n}=|a_{n}|_\times, p_{n} \geq 1$ and $q_{n} \geq 1$, for every $n$. Since $\prod\limits_{n=1}^{\infty} (p_{n}q_{n})=\prod\limits_{n=1}^{\infty}|a_{n}|_\times$, either $\prod\limits_{k=1}^{n} p_{k}\rightarrow +\infty$ as $n\rightarrow \infty$
or $\prod\limits_{k=1}^{n} q_{k}\rightarrow \infty$ as $n\rightarrow \infty$. Since $\frac{\prod\limits_{k=1}^{n} p_{k}}{\prod\limits_{k=1}^{n} q_{k}}=\prod\limits_{k=1}^{n} a_{k}$ converges as $n\rightarrow \infty$, if $\prod\limits_{k=1}^{n} p_{k}$ converges as $n\rightarrow \infty$, then $\prod\limits_{k=1}^{n} q_{k}$ converges as $n\rightarrow \infty$, and similarly, if $\prod\limits_{k=1}^{n} q_{k}$ converges as $n\rightarrow \infty$, then $\prod\limits_{k=1}^{n} p_{k}$ converges as $n\rightarrow \infty$. Hence $\prod\limits_{k=1}^{n} p_{k} \rightarrow \infty$ and $\prod\limits_{k=1}^{n} q_{k} \rightarrow \infty$ as $n\rightarrow \infty$. Now, let $p_{1}',p_{2}',...$ denote the factors of $\prod\limits_{n=1}^{\infty} a_{n}$  which are greater than or equal to $1$ in the order in which they occur, and let $q_{1}',q_{2}',...$  be the multiplicative absolute values of the remaining factors of $\prod\limits_{n=1}^{\infty} a_{n}$ in their original order. The products $\prod\limits_{n=1}^{\infty} p_{n}'$  and $\prod\limits_{n=1}^{\infty} q_{n}'$  are different from $\prod\limits_{n=1}^{\infty} p_{n}$ and $\prod\limits_{n=1}^{\infty} q_{n}$  only by factors $1$, and therefore $\prod\limits_{k=1}^{n} p_{k}' \rightarrow \infty$ and $\prod\limits_{k=1}^{n} q_{k}' \rightarrow \infty$ as $n\rightarrow \infty$. Choose sequences
$(\alpha_{n})_{n=1}^{\infty}$ and $(\beta_{n})_{n=1}^{\infty}$ of positive numbers such that $\alpha_{n}\rightarrow\alpha$ and $\beta_{n}\rightarrow\beta$ as $n\rightarrow\infty, \alpha_{n} < \beta_{n}$, $\forall n$, and $\beta_{1} \geq 1$. Let $m_{1}, k_{1}$ be the smallest integers such that $p_{1}'p_{2}'...p_{m_{1}}' > \beta_{1}$, $p_{1}'p_{2}'...p_{m_{1}}' q_{1}'^{-1}q_{2}'^{-1}...q_{k_{1}}'^{-1} < \alpha_{1}$ ; let $m_{2}, k_{2}$ be the smallest integers such that $p_{1}'p_{2}'...p_{m_{1}}' q_{1}'^{-1}q_{2}'^{-1}...q_{k_{1}}'^{-1} p_{m_{1}+1}'...p_{m_{2}}'> \beta_{2}$, $p_{1}'p_{2}'...p_{m_{1}}' q_{1}'^{-1}q_{2}'^{-1}...q_{k_{1}}'^{-1} p_{m_{1}+1}'...p_{m_{2}}' q_{k_{1}+1}'^{-1}...q_{k_{2}}'^{-1}< \alpha_{2} $;  and let us continue in this way. Let $x_{n},y_{n}$ denote the partial products of the rearrangement $p_{1}'p_{2}'...p_{m_{1}}' q_{1}'^{-1}q_{2}'^{-1}...q_{k_{1}}'^{-1} p_{m_{1}+1}'...p_{m_{2}}' q_{k_{1}+1}'^{-1}...q_{k_{2}}'^{-1}p_{m_{2}+1}'...$ whose last factors are $p_{m_{n}}', q_{k_{n}}'^{-1}$. Then $\frac{x_{n}}{p_{m_{n}}'} \leq \beta_{n}$ and $\frac{y_{n}}{q_{k_{n}}'^{-1}} \geq \alpha_{n}$ so that $p_{m_{n}}'^{-1} \leq \frac{\beta_{n}}{x_{n}} \leq 1$ and $1 \leq \frac{\alpha_{n}}{y_{n}} \leq q_{k_{n}}'$. Since $p_{n}' \rightarrow1$ and $q_{n}'\rightarrow1$ as $n \rightarrow\infty$, then $x_{n}\rightarrow \beta$ and $y_{n} \rightarrow\alpha $ as $n\rightarrow\infty$. From our construction, if $u_{n}'$ is the $n$-th partial product of the constructed rearrangement, then it is clear that $\liminf\limits_{n\rightarrow \infty} u_{n}'=\alpha$ and $\limsup\limits_{n \rightarrow \infty} u_{n}'=\beta $.
 \end{proof}

\begin{rem}
\ It should be observed from this Theorem 3.1 that if every rearrangement converges then $m$-absolute convergence holds.
\end{rem}
\begin{thm}
\ Let $(a_{n})_{n=1}^{\infty}$ be a sequence of positive numbers such that $\prod\limits_{n=1}^{\infty} |a_{n}|_\times$ converges. Then every rearrangement of $\prod\limits_{n=1}^{\infty} a_{n}$ converges and all of them converge to the same sum.
\end{thm}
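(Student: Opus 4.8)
The plan is to transport the classical proof that an absolutely convergent series may be rearranged at will, using the multiplicative modulus in place of the additive one. First I would extract the working form of the hypothesis. Since $|a_n|_\times\ge 1$ for every $n$, the partial products $\prod\limits_{k=1}^{n}|a_k|_\times$ increase to the finite limit $\prod\limits_{n=1}^{\infty}|a_n|_\times$; consequently the tails $T_N:=\prod\limits_{k=N+1}^{\infty}|a_k|_\times$ satisfy $T_N\ge 1$ and $T_N\to 1$ as $N\to\infty$. By Lemma 2.4, $\prod\limits_{n=1}^{\infty}a_n$ converges; denote its value by $p$ and put $p_N=\prod\limits_{k=1}^{N}a_k$.

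Next I would fix an arbitrary rearrangement, that is, a bijection $\sigma$ of $\{1,2,3,\dots\}$, and set $a_n'=a_{\sigma(n)}$ and $u_n'=\prod\limits_{k=1}^{n}a_k'$. Given $\varepsilon\in(0,1)$, choose $N$ with $T_N<1+\varepsilon$, and then choose $M$ so large that $\{1,2,\dots,N\}\subseteq\{\sigma(1),\dots,\sigma(M)\}$. For every $n\ge M$ the index set $\{\sigma(1),\dots,\sigma(n)\}$ is the disjoint union of $\{1,\dots,N\}$ with a finite set $S\subseteq\{N+1,N+2,\dots\}$, so $u_n'/p_N=\prod\limits_{k\in S}a_k$. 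Using $|a_k|_\times^{-1}\le a_k\le|a_k|_\times$ termwise and $S\subseteq\{N+1,N+2,\dots\}$, one gets $T_N^{-1}\le\prod\limits_{k\in S}a_k\le T_N$, hence $u_n'/p_N\in\big((1+\varepsilon)^{-1},\,1+\varepsilon\big)$ for all $n\ge M$. The same termwise estimate applied to the convergent tail gives $T_N^{-1}\le p/p_N=\prod\limits_{k=N+1}^{\infty}a_k\le T_N$, so that $u_n'/p=(u_n'/p_N)(p_N/p)\in\big((1+\varepsilon)^{-2},\,(1+\varepsilon)^2\big)$ whenever $n\ge M$.

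Letting $\varepsilon\downarrow 0$ then shows $u_n'\to p$, so the rearrangement converges, and since $p=\prod\limits_{n=1}^{\infty}a_n$ is independent of $\sigma$, all rearrangements converge to the same sum. The argument is essentially routine; the only point that needs care is the combinatorial bookkeeping step guaranteeing that the ``leftover'' index set $S$ consists entirely of integers exceeding $N$, since this is precisely what allows the single tail quantity $T_N$ to control $\prod\limits_{k\in S}a_k$ uniformly in $n$. I do not anticipate any genuine obstacle beyond that.
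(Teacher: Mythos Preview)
Your argument is correct. The key bookkeeping step you flag is exactly right: once $\{1,\dots,N\}\subseteq\{\sigma(1),\dots,\sigma(n)\}$, the leftover index set $S$ lies in $\{N+1,N+2,\dots\}$, so $\prod_{k\in S}|a_k|_\times\le T_N$ and the two-sided estimate on $u_n'/p_N$ follows; the analogous bound on $p/p_N$ is Lemma~2.4 applied to the tail.

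The paper's proof is organized differently. Rather than working with the bijection $\sigma$ and the single tail quantity $T_N$, it compares the $n$-th partial products $u_n$ and $u_n'$ directly by splitting $\{1,\dots,n\}$ into the positions where $|a_k|_\times=|a_k'|_\times$ (resp.\ $a_k=a_k'$) and the complementary set $B_n$ (resp.\ $D_n$), factoring out the common part, and bounding the residual products $\prod_{k\in B_n}|a_k|_\times$, $\prod_{k\in D_n}a_k$ via Theorem~1.3 and the global bound $M=\prod_{n}|a_n|_\times$. It first proves $\prod|a_k'|_\times$ converges (hence $\prod a_k'$ converges by Lemma~2.4), and then runs a second pass to show the limits coincide.

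Your route is the direct multiplicative transcription of the classical absolute-convergence rearrangement proof and is cleaner: one choice of $N$ and one tail quantity $T_N$ do all the work, and convergence of $u_n'$ together with identification of the limit come out simultaneously. The paper's approach trades this for a position-by-position comparison that stays closer to the Cauchy-criterion formulation of Theorem~1.3; it avoids ever naming the bijection, at the cost of two separate passes and some bookkeeping with the sets $B_n$, $D_n$.
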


\begin{proof}
\ Let $M=\prod\limits_{n=1}^{\infty} |a_{n}|_\times$. Let $u_{n}=\prod\limits_{k=1}^{n} a_{k}$ and $u_{n}'=\prod\limits_{k=1}^{n} a_{k}'$ be partial products of $\prod\limits_{k=1}^{\infty} a_{k}$ and its one rearrangement $\prod\limits_{k=1}^{\infty} a_{k}'$. To each $n$, let $A_{n}=\Big\{ k \in \{1,2,...,n \}: |a_{k}|_\times=|a_{k}'|_\times \Big\}$ and $B_{n} = \{1,2,...,n \}\setminus A_{n}$. Then
\begin{eqnarray*}
 0 \leq \Big| \prod\limits_{k=1}^{n} |a_{k}|_\times - \prod\limits_{k=1}^{n} |a_{k}'|_\times \Big|_{+} \leq M \Big| \prod\limits_{k \in B_{n}} |a_{k}|_\times - \prod\limits_{k \in B_{n}} |a_{k}'|_\times \Big|_{+}\\
  \leq M \Bigg[ \Big| 1- \prod\limits_{k \in B_{n}} |a_{k}|_\times \Big|_{+} + \Big|1 - \prod\limits_{k \in B_{n}} |a_{k}'|_\times \Big|_{+} \Bigg].
\end{eqnarray*}
\ By Theorem 1.3, the right hand side tends to zero as $n \longrightarrow \infty$, because $|a_{k}|_\times \geq 1$ and $|a_{k}'|_\times \geq 1$, $\forall k$. Therefore $\prod\limits_{k=1}^{\infty} |a_{k}|_\times = \prod\limits_{k=1}^{\infty} |a_{k}'|_\times$, and
$\prod\limits_{k=1}^{\infty} |a_{k}'|_\times $ converges. By Lemma 2.4, $\prod\limits_{k=1}^{\infty} a_{k}'$ converges. In the previous arguments, for $B_{n}$ defined above, let us observe that $\Big| 1- \prod\limits_{k \in B_{n}} |a_{k}|_\times^{-1} \Big|_{+} + \Big|1 - \prod\limits_{k \in B_{n}} |a_{k}'|_\times \Big|_{+}$ tends to zero as $n \rightarrow \infty$. This is applicable even for subsets of $B_{n}$. So, if $C_{n}= \Big\{ k \in \{1,2,...,n \}: a_{k}=a_{k}' \Big\}$ and $D_{n}=\{1,2,...,n \}\setminus C_{n}$, then $\Big| 1- \prod\limits_{k \in D_{n}} a_{k} \Big|_{+} + \Big|1 - \prod\limits_{k \in D_{n}} a_{k}' \Big|_{+} \rightarrow 0$ as $n \rightarrow \infty$, because $\prod\limits_{k \in D_{n}} |a_{k}|_\times^{-1} \leq \prod\limits_{k \in D_{n}} a_{k} \leq \prod\limits_{k \in D_{n}} |a_{k}|_\times $ and $\prod\limits_{k \in D_{n}} |a_{k}'|_\times^{-1} \leq \prod\limits_{k \in D_{n}} a_{k}' \leq \prod\limits_{k \in D_{n}} |a_{k}'|_\times $. Thus
\begin{eqnarray*}
 0 \leq \Big| \prod\limits_{k=1}^{n} a_{k} - \prod\limits_{k=1}^{n} a_{k}' \Big|_{+} \leq M \Bigg[ \Big| 1- \prod\limits_{k \in D_{n}} a_{k} \Big|_{+} + \Big|1 - \prod\limits_{k \in D_{n}} a_{k}' \Big|_{+} \Bigg]
\end{eqnarray*}
implies that $\prod\limits_{k=1}^{\infty} a_{k} = \prod\limits_{k=1}^{\infty} a_{k}'$.
\end{proof}

\begin{defn}
\ A product $\prod\limits_{k=1}^{\infty} b_{k}$ is called a subproduct of $\prod\limits_{k=1}^{\infty} a_{k}$ if $b_{k}=a_{k}$ or $1$ for all $k$, and $b_{k}=a_{k}$ for infinitely many $k$.
\end{defn}
\begin{thm}
\ Let $(a_{n})_{n=1}^{\infty}$ be a sequence of positive numbers such that $\prod\limits_{n=1}^{\infty} |a_{n}|_{\times}$ converges. Consider a class of all products of the form $\prod\limits_{n=1}^{\infty} a_{n}^{c_{n}}$, where $-1 \leq c_{n} \leq 1$, $\forall n$. Then all these products converge uniformly in the following sense. For every $\epsilon > 0$, there is an integer $n_{0}$, which is common for all sequences $(c_{n})_{n=1}^{\infty}$ satisfying $-1 \leq c_{n} \leq 1$, $\forall n$, such that $\Big|1- \prod\limits_{k=n}^{m} a_{k}^{c_{k}}\Big|_{+} <\epsilon $, $\forall m, n \geq n_{0}$ satisfying $m>n$, and so that $\Big|1- \prod\limits_{k=n}^{\infty} a_{k}^{c_{k}}\Big|_{+} \leq \epsilon $, $\forall n \geq n_{0}$.
\end{thm}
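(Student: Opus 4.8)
The plan is to reduce the whole statement to the convergence behaviour of $\prod_{n=1}^{\infty}|a_{n}|_{\times}$, which is already under control via Theorem 1.3, by means of a single comparison that is uniform in the exponents $(c_{n})$. The first step is the elementary identity $|x^{c}|_{\times}=|x|_{\times}^{|c|}$ for $x>0$ and $-1\le c\le 1$: checking the four cases given by $x\ge 1$ or $x\le 1$ together with $c\ge 0$ or $c\le 0$ shows that $\max\{x^{c},x^{-c}\}=\big(\max\{x,x^{-1}\}\big)^{|c|}$, and since $|x|_{\times}\ge 1$ and $0\le |c|\le 1$ this yields $|x^{c}|_{\times}\le |x|_{\times}$. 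Combining this with the submultiplicativity $|xy|_{\times}\le |x|_{\times}|y|_{\times}$ from Definition 2.1 and the basic bound $|y|_{\times}^{-1}\le y\le |y|_{\times}$, I obtain, for all $m>n$ and all admissible $(c_{k})$,
\[
\prod_{k=n}^{m}|a_{k}|_{\times}^{-1}\ \le\ \prod_{k=n}^{m}a_{k}^{c_{k}}\ \le\ \prod_{k=n}^{m}|a_{k}|_{\times},
\]
where the outer bounds no longer involve $(c_{k})$.

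Next I would invoke Theorem 1.3 for $\prod_{n=1}^{\infty}|a_{n}|_{\times}$: given $\epsilon>0$ there is an integer $n_{0}$ with $\big|\prod_{k=n}^{m}|a_{k}|_{\times}-1\big|_{+}<\epsilon$ for all $m>n\ge n_{0}$. Because each $|a_{k}|_{\times}\ge 1$, this means $1\le\prod_{k=n}^{m}|a_{k}|_{\times}<1+\epsilon$, hence also $\prod_{k=n}^{m}|a_{k}|_{\times}^{-1}>(1+\epsilon)^{-1}>1-\epsilon$. Substituting both estimates into the displayed sandwich gives $1-\epsilon<\prod_{k=n}^{m}a_{k}^{c_{k}}<1+\epsilon$, that is, $\big|1-\prod_{k=n}^{m}a_{k}^{c_{k}}\big|_{+}<\epsilon$, simultaneously for every sequence $(c_{k})$ with $-1\le c_{k}\le 1$. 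This is precisely the claimed uniform Cauchy-type estimate, with $n_{0}$ common to the whole family.

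For the final inequality I would first observe that each $\prod_{k=1}^{\infty}a_{k}^{c_{k}}$ converges: the partial products of $\prod_{k=1}^{\infty}|a_{k}^{c_{k}}|_{\times}$ are nondecreasing (all factors are $\ge 1$) and bounded above by $\prod_{k=1}^{\infty}|a_{k}|_{\times}<\infty$, so $\prod_{k=1}^{\infty}|a_{k}^{c_{k}}|_{\times}$ converges and Lemma 2.4 applies. Then, fixing $n\ge n_{0}$ and letting $m\to\infty$ in $\big|1-\prod_{k=n}^{m}a_{k}^{c_{k}}\big|_{+}<\epsilon$ gives $\big|1-\prod_{k=n}^{\infty}a_{k}^{c_{k}}\big|_{+}\le\epsilon$. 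I do not anticipate a genuine obstacle: the only mildly technical point is the identity $|x^{c}|_{\times}=|x|_{\times}^{|c|}$ and its short case analysis, after which everything is a direct application of Theorem 1.3 and Lemma 2.4 once the $(c_{k})$-free comparison is in place.
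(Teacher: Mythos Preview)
Your proof is correct and follows essentially the same approach as the paper: establish the $(c_k)$-free sandwich $\prod_{k=n}^{m}|a_{k}|_{\times}^{-1}\le \prod_{k=n}^{m}a_{k}^{c_{k}}\le \prod_{k=n}^{m}|a_{k}|_{\times}$ and then apply Theorem~1.3. The paper states this sandwich without justification and concludes via the bound $\big|1-\prod a_{k}^{c_{k}}\big|_{+}\le\max\big\{\big|1-\prod|a_{k}|_{\times}^{-1}\big|_{+},\,\big|1-\prod|a_{k}|_{\times}\big|_{+}\big\}$; your version supplies the extra details (the identity $|x^{c}|_{\times}=|x|_{\times}^{|c|}$, convergence of each individual product via Lemma~2.4, and the explicit passage to the limit for the tail estimate) that the paper leaves implicit.
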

\begin{proof}
 $\prod\limits_{k=n}^{m} |a_{k}|_{\times}^{-1} \leq \prod\limits_{k=n}^{m} a_{k}^{c_{k}} \leq \prod\limits_{k=n}^{m} |a_{k}|_{\times}$, when $-1 \leq c_{k} \leq 1$, $\forall k$, and when $m>n$. Thus
 \begin{eqnarray*}
 \Big|1- \prod\limits_{k=n}^{m} a_{k}^{c_{k}}\Big|_{+} \leq \max \Bigg\{ \Big|1- \prod\limits_{k=n}^{m} |a_{k}|_{\times}^{-1}\Big|_{+}, \Big|1- \prod\limits_{k=n}^{m} |a_{k}|_{\times}\Big|_{+} \Bigg\}.
 \end{eqnarray*}
 This observation proves the result, when Theorem 1.3 is applied.
\end{proof}
\begin{cor}
\ Let $(a_{n})_{n=1}^{\infty}$ be a sequence of positive numbers such that $\prod\limits_{n=1}^{\infty} |a_{n}|_{\times}$ converges. Then, all subproducts $\prod\limits_{k=1}^{\infty} b_{k}$ of $\prod\limits_{k=1}^{\infty} a_{k}$ converge uniformly in the following sense. For every $\epsilon >0$, there is an integer $n_{0}$, which is common for all subproducts $\prod\limits_{k=1}^{\infty} b_{k}$, such that $\Big|1- \prod\limits_{k=n}^{m} b_{k}\Big|_{+} <\epsilon $, $\forall m, n \geq n_{0}$ satisfying $m>n$, and so that $\Big|1- \prod\limits_{k=n}^{\infty} b_{k}\Big|_{+} \leq \epsilon $, $\forall n \geq n_{0}$.
\end{cor}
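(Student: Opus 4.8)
The plan is to recognize each subproduct as a special case of the products treated in Theorem 3.6 and then simply quote that theorem. Given a subproduct $\prod\limits_{k=1}^{\infty} b_{k}$ of $\prod\limits_{k=1}^{\infty} a_{k}$, by Definition 3.5 each $b_{k}$ equals either $a_{k}$ or $1$. Define $c_{k}=1$ when $b_{k}=a_{k}$ and $c_{k}=0$ when $b_{k}=1$. Then $b_{k}=a_{k}^{c_{k}}$ for every $k$, and clearly $-1\leq c_{k}\leq 1$ for every $k$. Hence the collection of all subproducts of $\prod\limits_{k=1}^{\infty} a_{k}$ is contained in the collection of all products of the form $\prod\limits_{k=1}^{\infty} a_{k}^{c_{k}}$ with $-1\leq c_{k}\leq 1$, $\forall k$.

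Now I would invoke Theorem 3.6 directly. Since $\prod\limits_{n=1}^{\infty} |a_{n}|_{\times}$ converges, Theorem 3.6 provides, for each $\epsilon>0$, an integer $n_{0}$ that is common to all sequences $(c_{n})_{n=1}^{\infty}$ with $-1\leq c_{n}\leq 1$, such that $\Big|1-\prod\limits_{k=n}^{m} a_{k}^{c_{k}}\Big|_{+}<\epsilon$ for all $m>n\geq n_{0}$ and $\Big|1-\prod\limits_{k=n}^{\infty} a_{k}^{c_{k}}\Big|_{+}\leq\epsilon$ for all $n\geq n_{0}$. Specializing to the sequences $(c_{n})$ with values in $\{0,1\}$ coming from subproducts as above, this same $n_{0}$ satisfies $\Big|1-\prod\limits_{k=n}^{m} b_{k}\Big|_{+}<\epsilon$ for all $m>n\geq n_{0}$ and $\Big|1-\prod\limits_{k=n}^{\infty} b_{k}\Big|_{+}\leq\epsilon$ for all $n\geq n_{0}$, which is exactly the claimed uniform convergence. (The convergence of each individual subproduct, so that the tail product $\prod\limits_{k=n}^{\infty} b_{k}$ is meaningful, also follows from Theorem 3.6, or alternatively from Lemma 2.4 applied to $(b_{k})_{k=1}^{\infty}$ since $|b_{k}|_{\times}\leq |a_{k}|_{\times}$ forces $\prod\limits_{k=1}^{\infty} |b_{k}|_{\times}$ to converge.)

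There is essentially no obstacle here; the only point requiring a moment's care is the bookkeeping that $\{0,1\}\subseteq[-1,1]$, so that the uniform index $n_{0}$ of Theorem 3.6 is legitimately shared by the sub-collection of subproducts. I would keep the proof to the two short steps above: (i) write $b_{k}=a_{k}^{c_{k}}$ with $c_{k}\in\{0,1\}$, and (ii) apply Theorem 3.6.
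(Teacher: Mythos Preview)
Your proposal is correct and is precisely the intended argument: the paper states this corollary immediately after Theorem~3.5 with no separate proof, leaving it as the obvious specialization $c_k\in\{0,1\}\subseteq[-1,1]$ that you spell out. (Your reference numbers are off by one relative to the paper---what you call Theorem~3.6 and Definition~3.5 are Theorem~3.5 and Definition~3.4 here---but the mathematics is the same.)
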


\ Theorem 3.5 is placed in this section, because its proof uses arguments which are used in the proof of Theorem 3.3. The next section is also devoted to rearrangements in terms of unordered products.
\section{Unordered Products:}
\ Let $I$ be an infinite set. In this section, $E,E_{1},E_{2},F,F_{1},F_{2}$ are notations to be used for finite subsets of $I$ with or without mentioning finiteness of them. Let $(a_{i})_{i \in I}$ be a collection of positive numbers. Let $I_{1}=\{i \in I: a_{i}>1 \}$, $I_{2}=\{i \in I: a_{i}<1 \}$ and $I_{3}=\{i \in I: a_{i}=1 \}$. These notations are also fixed in this section. Let us say that $\prod\limits_{i \in I} a_{i}$ converges, if there is a number $p \in (0,\infty)$ such that for given $\epsilon >0$, there is a finite set $F \subseteq I$ such that $\Big|\prod\limits_{i \in E} a_{i} - p\Big|_{+} < \epsilon $, for every finite set $E$ satisfying $F\subseteq E \subseteq I$. In this case, let us say that $\prod\limits_{i \in I} a_{i}$ converges to $p$ and let us write $\prod\limits_{i \in I} a_{i}=p$. Equivalently, this convergence happens when and only when for given $\epsilon >0$, there is a set $F \subseteq I$ such that $\Big|\frac{\prod\limits_{i \in E} a_{i}}{p} - 1\Big|_{+} < \epsilon $, for every $E$ satisfying $F\subseteq E \subseteq I$. One may find arguments, which are necessary to prove Theorem 1.3, in the proof of the next lemma.
\begin{lem}
\ $\prod\limits_{i \in I} a_{i}$ converges if and only if for given $ \epsilon >0$, there is a finite set $F \subseteq I$ such that $\Big|\prod\limits_{i \in E} a_{i} - 1\Big|_{+} < \epsilon $, whenever $ E \subseteq I\setminus F $.
\end{lem}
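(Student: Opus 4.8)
The plan is to read this statement as the multiplicative counterpart of the Cauchy criterion for unordered (net) summability of reals, the net being indexed by the finite subsets of $I$ directed by inclusion, and to prove the two implications separately. Throughout, $E$ and $F$ denote finite subsets of $I$, empty products are read as $1$, and I will repeatedly use that if $E$ and $F$ are disjoint then $\prod_{i\in E\cup F}a_i=(\prod_{i\in E}a_i)(\prod_{i\in F}a_i)$.

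For the ``only if'' direction, suppose $\prod_{i\in I}a_i=p$ with $p\in(0,\infty)$, and let $\epsilon>0$. First I would fix $\delta\in(0,p)$ small enough (depending only on $p$ and $\epsilon$) that $\frac{p+\delta}{p-\delta}<1+\epsilon$ and $\frac{p-\delta}{p+\delta}>1-\epsilon$; this is possible since both quotients tend to $1$ as $\delta\to0^{+}$. By the definition of convergence there is a finite $F\subseteq I$ with $|\prod_{i\in E'}a_i-p|_{+}<\delta$ for every finite $E'$ with $F\subseteq E'\subseteq I$. Now given finite $E\subseteq I\setminus F$, apply this to $E'=F$ and to $E'=F\cup E$: since $F$ and $E$ are disjoint, $\prod_{i\in E}a_i=(\prod_{i\in F\cup E}a_i)/(\prod_{i\in F}a_i)$ is a quotient of two numbers lying in $(p-\delta,p+\delta)$, and the choice of $\delta$ forces $|\prod_{i\in E}a_i-1|_{+}<\epsilon$.

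For the ``if'' direction I would construct the limit explicitly. Applying the hypothesis with $\epsilon=1/k$ and then replacing the resulting finite sets by their successive unions yields a nested sequence $F_1\subseteq F_2\subseteq\cdots$ of finite subsets of $I$ such that $|\prod_{i\in E}a_i-1|_{+}<1/k$ whenever $E$ is finite and $E\subseteq I\setminus F_k$. Put $p_k=\prod_{i\in F_k}a_i>0$. For $m>k\geq2$ the set $F_m\setminus F_k$ is disjoint from $F_k$ and contained in $I\setminus F_k$, so $p_m/p_k=\prod_{i\in F_m\setminus F_k}a_i\in(1-\tfrac1k,1+\tfrac1k)\subseteq(\tfrac12,\tfrac32)$. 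Hence $(p_m)_{m\geq2}$ lies in $(p_2/2,\,3p_2/2)$ and $|p_m-p_k|=p_k\,|p_m/p_k-1|<3p_2/(2k)\to0$, so $(p_k)$ is a Cauchy sequence of reals converging to some $p\in[p_2/2,\,3p_2/2]\subseteq(0,\infty)$.

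It then remains to verify $\prod_{i\in I}a_i=p$. Given $\epsilon>0$, I would pick $k\geq2$ so large that $|p_k-p|<\epsilon/2$ and $3p_2/(2k)<\epsilon/2$, and set $F=F_k$. For any finite $E$ with $F_k\subseteq E\subseteq I$, write $\prod_{i\in E}a_i=p_k\prod_{i\in E\setminus F_k}a_i$, where $E\setminus F_k\subseteq I\setminus F_k$, so $\prod_{i\in E\setminus F_k}a_i=1+\eta$ with $|\eta|<1/k$, giving $|\prod_{i\in E}a_i-p|_{+}\leq|p_k-p|+p_k|\eta|<\epsilon$. I expect the reverse direction to be the only real obstacle: the key idea is to pass to a nested sequence of finite sets and extract the candidate value $p$ from ordinary completeness of $\mathbb{R}$, exploiting that the condition $|x-1|_{+}<1/k$ keeps all the relevant partial products uniformly bounded away from $0$ and $\infty$; the remaining work is routine bookkeeping with disjoint unions of finite sets.
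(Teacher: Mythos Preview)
Your proof is correct and follows essentially the same strategy as the paper's: in both directions the key is the identity $\prod_{i\in E\cup F}a_i=\bigl(\prod_{i\in E}a_i\bigr)\bigl(\prod_{i\in F}a_i\bigr)$ for disjoint $E,F$, together with completeness of $\mathbb{R}$ and a uniform lower bound keeping the limit in $(0,\infty)$. The only cosmetic difference is that in the converse you extract an explicit nested sequence $(F_k)$ and show the numerical sequence $(p_k)$ is Cauchy, whereas the paper argues directly that the full net $\bigl(\prod_{i\in E}a_i\bigr)_{E\supseteq F_1}$ is Cauchy; both arrive at the same conclusion.
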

\begin{proof}
\ Suppose $\prod\limits_{i \in I} a_{i}$ converges to some $p >0$. Find $F_{1} \subseteq I$ such that $\Big|\prod\limits_{i \in E} a_{i} - p\Big|_{+} < \frac{p}{4} $, whenever $ F_{1} \subseteq E \subseteq I $, and so that $0<\frac{3}{4}p<\prod\limits_{i \in E} a_{i}<\frac{5}{4}p$. For given $\epsilon>0$, there is a set $F_{2}\supseteq F_{1}$ such that $\Big| \prod\limits_{i \in E} a_{i}-\prod\limits_{i \in F_{2}} a_{i}\Big|_{+}<\frac{3}{4}p\epsilon$ whenever $E\supseteq F_{2}$, and so that $\Big| \prod\limits_{i \in E\setminus F_{2}} a_{i}-1\Big|_{+}<\frac{3p\epsilon}{4\prod\limits_{i \in F_{2}} a_{i}}<\epsilon$. That is, $\Big| \prod\limits_{i \in E} a_{i}-1\Big|_{+}<\epsilon$, whenever $E\subseteq I \setminus F_{2}$.

\ Conversely, assume that for given $\epsilon > 0$, there is a set $F$ such that $\Big| \prod\limits_{i \in E} a_{i}-1\Big|_{+}<\epsilon$, whenever $E\subseteq I \setminus F$. In particular, there is a set $F_{1}$ such that $\Big| \prod\limits_{i \in E} a_{i}-1\Big|_{+}<\frac{1}{4}$, whenever $E\subseteq I \setminus F_{1}$, and so that  $\frac{3}{4}<\prod\limits_{i \in E} a_{i}<\frac{5}{4}$. Therefore, $\frac{3}{4}M<\prod\limits_{i \in E} a_{i}<\frac{5}{4}M$, for any $E\subseteq I$, where $M=\prod\limits_{i \in F_{1}} a_{i}$. Fix $\epsilon>0$. Find a set $F_{2}\supseteq F_{1}$ such that $\Big| \prod\limits_{i \in E} a_{i}-1 \Big|_{+}<\frac{3M\epsilon}{4}$, whenever $E\subseteq I \setminus F_{2}$. Then, for $ F_{2} \subseteq E_{2}\subseteq E_{1}$, $\Big|\prod\limits_{i \in E_{1}} a_{i}-\prod\limits_{i \in E_{2}} a_{i}\Big|_{+}=\Big| \prod\limits_{i \in E_{1}\setminus E_{2}} a_{i}-1\Big|_{+}\Big| \prod\limits_{i\in E_{2}}a_{i}\Big|^{-1}_{+}<\epsilon$. Therefore, by triangle inequality, $\Big|\prod\limits_{i \in E_{1}} a_{i}-\prod\limits_{i \in E_{2}} a_{i}\Big|_{+}<2\epsilon$, whenever $F_{2}\subseteq E_{1}$ and $F_{2}\subseteq E_{2}$. Hence $\prod\limits_{i\in I}a_{i}$ converges.
\end{proof}
\ The following inequality will be used at the end of the proof of the next lemma. $|1-ab|_{+} \leq |1-a|_{+}+|a|_{+}|1-b|_{+}$.
\begin{lem} $\prod\limits_{i\in I} a_{i}$ converges if and only if $\prod\limits_{i\in I_{1}} a_{i}$ converges and $\prod\limits_{i\in I_{2}} a_{i}$ converges. (Convention: Finite products converge). Moreover, $\prod\limits_{i\in I} a_{i}=\Big(\prod\limits_{i\in I_{1}} a_{i}\Big)\Big(\prod\limits_{i\in I_{2}} a_{i}\Big)$, in this case.
\begin{proof}
Suppose $\prod\limits_{i\in I}a_{i}$ converges. Fix $\epsilon>0$. Then there is a set $F$ such that $\Big|\prod\limits_{i\in E}a_{i}-1\Big|_{+}<\epsilon$, whenever $E\subseteq I\setminus F$. In particular,  $\Big|\prod\limits_{i\in E}a_{i}-1\Big|_{+}<\epsilon$, whenever $E\subseteq I_{1}\setminus (I_{1}\cap F)$, and $\Big|\prod\limits_{i\in E}a_{i}-1\Big|_{+}<\epsilon$, whenever $E\subseteq I_{2}\setminus (I_{2}\cap F)$. This proves that $\prod\limits_{i\in I_{1}} a_{i}$ and $\prod\limits_{i\in I_{2}} a_{i}$ converge. Since $\prod\limits_{i\in E} a_{i}=\Big(\prod\limits_{i\in E \cap I_{1}} a_{i}\Big)\Big(\prod\limits_{i\in E \cap I_{2}} a_{i}\Big)$ for any $E$ and multiplication is continuous in the real line, then $\prod\limits_{i\in I} a_{i}=\Big(\prod\limits_{i\in I_{1}} a_{i}\Big)\Big(\prod\limits_{i\in I_{2}} a_{i}\Big)$. This continuity of multiplication should be observed in terms of nets over directed sets $I_1,I_2$, and $I$.

Conversely assume that $\prod\limits_{i\in I_{1}}a_{i}$ and $\prod\limits_{i\in I_{2}}a_{i}$ converge. Fix $\epsilon>0$. There are $F_{1}$, $F_{2}$ such that $\Big|\prod\limits_{i\in E}a_{i}-1\Big|_{+}<\epsilon$, whenever $E\subseteq I_{1}\setminus F_{1}$, and $\Big|\prod\limits_{i\in E}a_{i}-1\Big|_{+}<\epsilon$, whenever $E\subseteq I_{2}\setminus F_{2}$. Then $\Big|\prod\limits_{i\in E}a_{i}-1\Big|_{+}<\epsilon+(1+\epsilon)\epsilon$, whenever $E\subseteq I\setminus F_{1}\cup F_{2}$. This proves that $\prod\limits_{i\in I}a_{i}$ converges.
\end{proof}
\end{lem}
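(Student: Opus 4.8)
The plan is to deduce both implications from the Cauchy-type criterion of Lemma 4.1, namely that $\prod_{i\in I}a_{i}$ converges precisely when for every $\epsilon>0$ there is a finite $F\subseteq I$ with $\big|\prod_{i\in E}a_{i}-1\big|_{+}<\epsilon$ for all $E\subseteq I\setminus F$, together with the inequality $|1-ab|_{+}\le|1-a|_{+}+|a|_{+}|1-b|_{+}$ recorded just above. Since the factors indexed by $I_{3}$ are all $1$, throughout I can split $\prod_{i\in E}a_{i}=\big(\prod_{i\in E\cap I_{1}}a_{i}\big)\big(\prod_{i\in E\cap I_{2}}a_{i}\big)$ for every finite $E\subseteq I$.

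For the forward implication, assume $\prod_{i\in I}a_{i}$ converges. Given $\epsilon>0$, pick a finite $F\subseteq I$ as in Lemma 4.1. Since $I_{1}\setminus(I_{1}\cap F)\subseteq I\setminus F$, the estimate $\big|\prod_{i\in E}a_{i}-1\big|_{+}<\epsilon$ holds for all $E\subseteq I_{1}\setminus(I_{1}\cap F)$, so Lemma 4.1 applied inside $I_{1}$ shows $\prod_{i\in I_{1}}a_{i}$ converges; the same argument inside $I_{2}$ handles $\prod_{i\in I_{2}}a_{i}$ (and if either index set is finite we invoke the stated convention). To get the product formula, write $P_{1}=\prod_{i\in I_{1}}a_{i}$, $P_{2}=\prod_{i\in I_{2}}a_{i}$ and view everything as nets over the directed set of finite subsets of $I$. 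I would check that $\prod_{i\in E\cap I_{1}}a_{i}\to P_{1}$ along this net: choosing a finite $F_{1}\subseteq I_{1}$ that witnesses convergence of $\prod_{i\in I_{1}}a_{i}$, every finite $E\supseteq F_{1}$ satisfies $F_{1}\subseteq E\cap I_{1}\subseteq I_{1}$, hence $\prod_{i\in E\cap I_{1}}a_{i}$ is within $\epsilon$ of $P_{1}$; likewise $\prod_{i\in E\cap I_{2}}a_{i}\to P_{2}$. Continuity of multiplication on $(0,\infty)$, read along these nets, then gives $\prod_{i\in I}a_{i}=P_{1}P_{2}$.

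For the converse, assume $\prod_{i\in I_{1}}a_{i}$ and $\prod_{i\in I_{2}}a_{i}$ converge. Given $\epsilon>0$, use Lemma 4.1 to choose finite $F_{1}\subseteq I_{1}$, $F_{2}\subseteq I_{2}$ with $\big|\prod_{i\in E}a_{i}-1\big|_{+}<\epsilon$ for $E\subseteq I_{1}\setminus F_{1}$ and for $E\subseteq I_{2}\setminus F_{2}$. Put $F=F_{1}\cup F_{2}$. For any $E\subseteq I\setminus F$ we have $E\cap I_{1}\subseteq I_{1}\setminus F_{1}$ and $E\cap I_{2}\subseteq I_{2}\setminus F_{2}$, so $\prod_{i\in E}a_{i}=ab$ with $|a-1|_{+}<\epsilon$ and $|b-1|_{+}<\epsilon$, and then $|1-ab|_{+}\le|1-a|_{+}+|a|_{+}|1-b|_{+}$ gives $\big|\prod_{i\in E}a_{i}-1\big|_{+}<\epsilon+(1+\epsilon)\epsilon$. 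As $\epsilon>0$ was arbitrary, Lemma 4.1 shows $\prod_{i\in I}a_{i}$ converges.

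The routine part is the two short applications of Lemma 4.1 plus the recorded inequality; the one point that needs care is the product formula, where one must phrase the limits as nets over the directed set of finite subsets of $I$ and verify that passing to $I_{1}$ and $I_{2}$ is compatible with taking such limits. That is the step I expect to be the main obstacle.
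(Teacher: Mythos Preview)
Your proposal is correct and follows essentially the same route as the paper: both directions are obtained from the Cauchy criterion of Lemma~4.1, the converse uses the inequality $|1-ab|_{+}\le|1-a|_{+}+|a|_{+}|1-b|_{+}$ to obtain the same bound $\epsilon+(1+\epsilon)\epsilon$, and the product formula is justified via continuity of multiplication along nets. Your treatment of the product formula is in fact a bit more explicit than the paper's, which simply asserts the net-continuity step without spelling out that $\prod_{i\in E\cap I_{1}}a_{i}\to P_{1}$ along the net of finite $E\subseteq I$.
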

\begin{rem}
Suppose $\prod\limits_{i\in I}a_{i}$ converges. For each $n$, let $G_{n}=\{i\in I: a_{i}<1-\frac{1}{n}\}$ and $H_{n}=\{i\in I: a_{i}>1+\frac{1}{n}\}$.  Then $G_{n}$ and $H_{n}$ are finite sets, because $\prod\limits_{i\in I_{1}}a_{i}$ and $\prod\limits_{i\in I_{2}}a_{i}$ converge. So, $I_{1}=\bigcup\limits^{\infty}_{n=1} H_{n}$ and $I_{2}=\bigcup\limits^{\infty}_{n=1} G_{n}$ are countable sets. Thus, let us replace $I$ by the countable set $I_{1} \cup I_{2}$, when there is a need for simplification in notation in connection with convergence. On some occasions, it will be fixed as $I= \{1,2,...\}$. It should be observed that convergence of all these products should be explained only in terms of nets over directed sets $I_1,I_2$, and $I$.
\end{rem}
\begin{thm}
\ The followings are equivalent. \\
\ (a) $\prod\limits_{i\in I} |a_{i}|_{\times} $ converges. \\
\ (b) $\prod\limits_{i\in I}a_{i}$ converges. \\
\ (c)  Let $D$ be a directed set which is a cofinal subset of the directed set $\{ E: E$ is a finite subset of $I \}$ under the inclusion relation. That is, if $E$ is any finite subset of $I$, then there is a set $F \in D$ such that $E \subseteq F$. (See [2] for terminology). Then the net $\Big( \prod\limits_{i\in F} |a_{i}|_{\times}\Big)_{ F \in D} $ converges for any $D$ mentioned above. Moreover, all these nets converge to $\prod\limits_{i\in I} |a_{i}|_{\times} $.\\
\ (d) The net $\Big( \prod\limits_{i\in F} a_{i}\Big)_{F \in D}  $ converges for any $D$ mentioned in (c). Moreover, all these nets converge to $\prod\limits_{i\in I} a_{i} $.\\
\ (e) Consider $I$ in the form $ \{1,2,...\}$ as it was mentioned in the previous Remark 4.3, on assuming (a) or (b). Write $I$ in the form $I= J_{1} \cup J_{2} \cup....$ in which $J_{i}$ are pairwise disjoint infinite sets. Let us write each $J_{i}$ in the ordered form $J_{i} =\{ i_{1}, i_{2},...\}$. Then for each $i$, the product $\prod\limits^{\infty}_{j=1} a_{i_{j}}$ converges, and then the product $\prod\limits^{\infty}_{i=1} \Big(\prod\limits^{\infty}_{j=1} a_{i_{j}}\Big)$ converges, for any decomposition $J_{1} \cup J_{2} \cup....$ of $I$. In this case, they all converge to $\prod\limits_{i\in I}a_{i}$.\\
\ Note: The statement in part (e) is given in this specific form only for the purpose of the implication: (a) or (b) implies (e). For the reverse implication, let us take I as a countable set.
\end{thm}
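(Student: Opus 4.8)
The plan is to prove the five statements equivalent with $(b)$ as the hub: $(a)\Leftrightarrow(b)$, then $(b)\Leftrightarrow(d)$ and $(a)\Leftrightarrow(c)$ as essentially formal consequences of the definitions, and finally $(b)\Leftrightarrow(e)$, which carries the real content. First I would record a two-piece refinement of Lemma~4.2: for \emph{any} partition $I=A\sqcup B$ the product $\prod_{i\in I}a_i$ converges iff $\prod_{i\in A}a_i$ and $\prod_{i\in B}a_i$ both converge, and then $\prod_{i\in I}a_i=\big(\prod_{i\in A}a_i\big)\big(\prod_{i\in B}a_i\big)$; the proof of Lemma~4.2 applies verbatim (forward: restrict the Cauchy criterion of Lemma~4.1; backward: the displayed inequality $|1-ab|_+\le|1-a|_++|a|_+|1-b|_+$, with continuity of multiplication for nets over $A,B,I$). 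Applying this with $A=I_1\cup I_3$ and $B=I_2$, using that a net of positive reals converges to $p>0$ iff its reciprocal net converges to $p^{-1}$ and that $|a_i|_\times=a_i$ on $A$ while $|a_i|_\times=a_i^{-1}$ on $B$, yields $(a)\Leftrightarrow(b)$ (the value inequality being the sandwich of Lemma~2.4). For $(b)\Leftrightarrow(d)$: the family of all finite subsets of $I$ is itself a cofinal directed set, so specializing $(d)$ to that $D$ reproduces $(b)$ from the definition of convergence of $\prod_{i\in I}a_i$; conversely, if $\prod_{i\in I}a_i=p$ and $D$ is cofinal, then given $\epsilon$ I take the finite $F_0$ from the definition, find $F_*\in D$ with $F_*\supseteq F_0$ by cofinality, and note that every $G\in D$ with $G\supseteq F_*$ obeys $|\prod_{i\in G}a_i-p|_+<\epsilon$ — which is net convergence to $p$; the same argument with $|a_i|_\times$ gives $(a)\Leftrightarrow(c)$.

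For $(b)\Rightarrow(e)$: given a decomposition $I=J_1\sqcup J_2\sqcup\cdots$ with each $J_i=\{i_1,i_2,\dots\}$ infinite, restricting the Cauchy criterion of Lemma~4.1 shows each $\prod_{k\in J_i}a_k$ converges as an unordered product, and since the sets $\{i_1,\dots,i_n\}$ are cofinal among the finite subsets of $J_i$, the ordered product $\prod_{j=1}^{\infty}a_{i_j}$ converges to the same value $P_i$. To obtain $\prod_{i=1}^{n}P_i\to p:=\prod_{i\in I}a_i$: using $(a)$ and Lemma~4.1, for a given $\epsilon$ I fix a finite $F$ with $(1+\epsilon)^{-1}\le\prod_{k\in E}a_k\le1+\epsilon$ for all $E\subseteq I\setminus F$ (via the $|a_k|_\times\ge1$ sandwich), then choose $N$ with $F\subseteq S_N$ where $S_n:=J_1\cup\cdots\cup J_n$; for $n\ge N$ the two-piece splitting gives $p=\big(\prod_{k\in S_n}a_k\big)\big(\prod_{k\in I\setminus S_n}a_k\big)$ with the second factor in $[(1+\epsilon)^{-1},1+\epsilon]$, while iterating the splitting over $S_n=J_1\sqcup\cdots\sqcup J_n$ identifies $\prod_{k\in S_n}a_k$ with $\prod_{i=1}^{n}P_i$. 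Hence $\prod_{i=1}^{n}P_i\in[p(1+\epsilon)^{-1},p(1+\epsilon)]$ for all $n\ge N$, so $\prod_{i=1}^{n}P_i\to p$.

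For $(e)\Rightarrow(b)$ I would argue contrapositively, taking $I$ countable as the Note permits. If $(b)$ fails then so does $(a)$, so $\prod_{i\in I}|a_i|_\times=+\infty$; since $\prod_{i\in E}|a_i|_\times=\prod_{i\in E\cap I_1}a_i\cdot\prod_{i\in E\cap I_2}a_i^{-1}$, this forces $\sup_E\prod_{i\in E}a_i=+\infty$ over finite $E\subseteq I_1$ or $\inf_E\prod_{i\in E}a_i=0$ over finite $E\subseteq I_2$; I treat the first case (the second being symmetric, with decreasing products tending to $0$), so $I_1$ is infinite. If $I\setminus I_1$ is infinite, take $J_1=I_1$ in its given order and let $J_2,J_3,\dots$ partition $I\setminus I_1$ into infinitely many infinite sets; then the ordered product over $J_1$ is increasing with supremum $+\infty$, hence diverges, contradicting $(e)$. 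If $I\setminus I_1$ is finite, I first split $I_1$ into two infinite sets $L_1,L_2$ each still with $\sup_E\prod_{i\in E}a_i=+\infty$ — the standard fact that a divergent series of positive terms splits into two divergent subseries over infinite index sets (applied to $\log a_i$, $i\in I_1$) — then partition $L_2$ into infinitely many infinite sets, and set $J_1=L_1\cup(I\setminus I_1)$ ordered with the finite part first, $J_k\subseteq L_2$ for $k\ge2$; the ordered product over $J_1$ again diverges, contradicting $(e)$.

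The step I expect to be the main obstacle is $(e)\Rightarrow(b)$: it requires both a careful choice of the offending decomposition and attention to the degenerate configuration in which $I_2\cup I_3$ is finite, where the series-splitting lemma is genuinely needed. The remaining equivalences are comparatively soft, resting on the two-piece form of Lemma~4.2, the Cauchy criterion of Lemma~4.1, and cofinality; the one place in $(b)\Rightarrow(e)$ that still needs care is the triple identification of the unordered product over each $J_i$ with the ordered inner factor $P_i$ and with the $n$-th partial product of the outer product.
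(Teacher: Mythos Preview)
Your argument is correct and follows the paper's overall architecture---$(a)\Leftrightarrow(b)$ via the $I_1/I_2$ split, $(c)$ and $(d)$ as formal net consequences, then $(e)$ in both directions---but the local mechanisms differ in a few places worth noting. For $(b)\Rightarrow(e)$ the paper works entirely through the sandwich $\bigl(\prod_{j=m}^{n}|a_{i_j}|_\times\bigr)^{-1}\le\prod_{j=m}^{n}a_{i_j}\le\prod_{j=m}^{n}|a_{i_j}|_\times$ together with Theorem~1.3, both for the inner and the outer product; you instead restrict Lemma~4.1 to each $J_i$ and to each $S_n$, then iterate your two-piece refinement of Lemma~4.2 to identify $\prod_{k\in S_n}a_k$ with $\prod_{i=1}^{n}P_i$. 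Your route is cleaner in that it delivers the ``moreover'' clause of $(e)$ (the common value $\prod_{i\in I}a_i$) directly, whereas the paper obtains that value separately by building a cofinal $D$ out of the finite rectangles $\{i_j:1\le j\le n_i,\;1\le i\le m(n)\}$ and invoking $(d)$. For the contrapositive $(e)\Rightarrow(b)$ the paper appeals loosely to Remarks~3.2 and~1.4 to produce an infinite $J_1$ with $I_1\setminus J_1$ infinite and $\prod_j a_{1_j}$ divergent; your explicit case split on whether $I\setminus I_1$ is infinite, together with the series-splitting lemma applied to $\sum_{i\in I_1}\log a_i$, is more transparent and self-contained, and it correctly handles the degenerate case the paper leaves to those remarks. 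Either approach works; yours trades a small amount of extra bookkeeping (the general two-piece lemma) for not having to call back to the rearrangement section.
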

\begin{proof}
\ Suppose $\prod\limits_{i\in I} |a_{i}|_{\times} $ converges. Then for any finite subset $F$ of $I$,
\begin{eqnarray*}
 \Big(\prod\limits_{i\in F} |a_{i}|_{\times}\Big)^{-1}  \leq \prod\limits_{i\in F} a_{i}  \leq \prod\limits_{i\in F} |a_{i}|_{\times},
\end{eqnarray*}
and Lemma 4.1 implies that $\prod\limits_{i\in I} a_{i} $ converges.

\ Conversely assume that $\prod\limits_{i\in I} a_{i} $ converges. Then $\prod\limits_{i\in I_{1}}a_{i}$ and $\prod\limits_{i\in I_{2}}a_{i}$ converges, by Lemma 4.2. So, $\prod\limits_{i\in I_{1}} |a_{i}|_{\times} $ and $\prod\limits_{i\in I_{2}} |a_{i}|_{\times} $ converges.
 By continuity of multiplication, $\prod\limits_{i\in I} |a_{i}|_{\times} $ converges.

 \ Thus (a) and (b) are equivalent. From the definition for net convergence, (a), (b), (c), (d) are equivalent, and common limits for nets also do exist for (c) and (d).

 \ To prove the equivalence of (a) and (e), suppose that $\prod\limits_{i\in I} |a_{i}|_{\times} $ converges. By Lemma 4.1, and Theorem 1.3, $\prod\limits^{\infty}_{j=1} |a_{i_{j}}|_{\times}$ converges for each $i$, when the notations given in (e) are used. Moreover,
 \begin{eqnarray*}
 1 \leq \prod\limits^{n}_{i=m} \prod\limits^{\infty}_{j=1} |a_{i_{j}}|_{\times}  \leq \prod\limits_{i\in I} |a_{i}|_{\times},
\end{eqnarray*}
for any $n, m$ satisfying $n> m$. Again by Lemma 4.1 and Theorem 1.3, it can be observed that $\prod\limits^{\infty}_{i=1} \Big(\prod\limits^{\infty}_{j=1} |a_{i_{j}}|_{\times}\Big)$ converges. The inequalities, for $n>m$,
\begin{eqnarray*}
 \Big(\prod\limits^{n}_{j=m} |a_{i_{j}}|_{\times}\Big)^{-1}  \leq \prod\limits^{n}_{j=m} a_{i_{j}} \leq \prod\limits^{n}_{j=m} |a_{i_{j}}|_{\times}
\end{eqnarray*}
give the convergence of $\prod\limits^{\infty}_{j=1} a_{i_{j}}$, by Theorem 1.3. Similarly, the inequalities, for $n>m$,
\begin{eqnarray*}
\Big(\prod \limits^{n}_{i=m} \Big(\prod\limits^{\infty}_{j=1} |a_{i_{j}}|_{\times}\Big)\Big)^{-1} \leq \prod\limits^{n}_{i=m} \prod\limits^{\infty}_{j=1} a_{i_{j}} \leq \prod\limits^{n}_{i=m} \prod\limits^{\infty}_{j=1} |a_{i_{j}}|_{\times}
\end{eqnarray*}
give the convergence of $\prod\limits^{\infty}_{i=1} \prod\limits^{\infty}_{j=1} a_{i_{j}}$.

\ To prove the converse part, assume that $\prod\limits_{i\in I} |a_{i}|_{\times}$ does not converge, or equivalently
$\prod\limits^{\infty}_{n=1} |a_{n}|_{\times}$ does not converge, when $I$ is considered in the form $\{1,2,...\}=I_{1}\cup I_{2}$ as stated in the statement for (e). Then either $\prod\limits_{i\in I_{1}} |a_{i}|_{\times}=+\infty$ or $\prod\limits_{i\in I_{2}} |a_{i}|_{\times}= +\infty$. For example, let us consider the case $\prod\limits_{i\in I_{1}} |a_{i}|_{\times}=+\infty$. In this case, let us find an infinite subset $J_{1}= \{1_{1},1_{2},...\}$ of $I$ such that $I_{1}\setminus J_{1}$ is infinite and such that $\prod\limits^{\infty}_{j=1} a_{1_{j}}$ does not converge. This is possible in view of Remark 3.2 and Remark 1.4. Write $(I_{1} \setminus J_{1})\cup I_{2}$ in the form $J_{2}\cup J_{2}\cup...$ such that each $J_{i}=\{i_{1},i_{2},...\}$ is an infinite set and such that $J_{i}$ are pairwise disjoint. Then $\prod\limits^{\infty}_{i=1} \prod\limits^{\infty}_{j=1} a_{i_{j}}$ does not converge.

\ It remains to establish the final part of (e). Let $p=\prod\limits^{\infty}_{i=1} \prod\limits^{\infty}_{j=1} a_{i_{j}}$
for some partition $I=J_{1}\cup J_{2}\cup...$. For each $n$, it is possible to find integers $m(n),n_{1},n_{2},...,n_{m(n)}$ such that $\Big|\prod\limits^{m(n)}_{i=1} \prod\limits^{n_{i}}_{j=1} a_{i_{j}}-p\Big|_{+} < \frac{1}{n}$, and such that $k_{1>}n_{1}, k_{2}>n_{2},...,k_{m(n)}>n_{m(n)}$, $m(k)> m(n)$, whenever $k>n$. Let $D=\Big\{ \{i_{j}: 1 \leq j \leq n_{i}, 1 \leq i \leq m(n) \}:n=1,2,...\Big\}$. Consider $D$ as a directed set under inclusion relation, and when $I$ is considered as $\{i_{j}: j=1,2,... , i=1,2,...\}$. Now the final part of (d) implies that $p=\prod\limits_{i\in I}a_{i}$. The proof is now completed.
\end{proof}
\begin{rem}
\ In the previous Theorem 4.4, (e) includes iterated convergence for double products. The part (d) includes some special types of net convergence meant for double products corresponding to the one known for double series in general forms. References for such general forms may be found in [3]. By Theorem 3.3 and Remark 3.2, it can be stated that all parts (a), (b), (c), (d),(e) are equivalent to one more part. (f): Consider $I$ in the form $\{1,2,...\}$ (as it was done in (e)). Then all rearrangements $\prod\limits^{\infty}_{n=1} a_{n}$ of $\prod\limits_{i\in I}a_{i}$ converge. Moreover, they all converge to $\prod\limits_{i\in I}a_{i}$.
\end{rem}

\section{Tests For Convergence}
\ If a necessary condition for convergence is not satisfied then convergence fails. If a sufficient condition for convergence is satisfied then convergence is assured. All results giving conditions for convergence are classified under tests for convergence. For example, if $\limsup\limits_{n \rightarrow \infty} |\log a_{n}|_{+}^{\frac{1}{n}} <1$ then $\prod\limits^{\infty}_{n=1} a_{n}$ converges, formally, according to the root test for series convergence. This statement can be made into a logically correct statement. But, it is not our aim. Our aim is to use necessary transformations in arguments of proofs of results. Let us consider again countable infinite products.
\begin{prop}
\ Let $(t_{n})_{n=1}^{\infty}$ and $(a_{n})_{n=1}^{\infty}$ be sequences of positive numbers such that $\sum\limits^{\infty}_{n=1} t_{n}$ converges and such that $\limsup\limits_{n\rightarrow \infty}a_{n}^{\frac{1}{t_{n}}}=\beta <\infty$ and $\liminf\limits_{n\rightarrow \infty}a_{n}^{\frac{1}{t_{n}}}=\alpha >0$. Then $\prod\limits^{\infty}_{n=1} a_{n}$ converges.
\end{prop}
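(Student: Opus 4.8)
The plan is to reduce the statement to $m$-absolute convergence and then quote Lemma 2.4. First I would convert the hypotheses on $\limsup$ and $\liminf$ into a two-sided bound on $a_n$ in terms of $t_n$. Fix $\epsilon$ with $0 < \epsilon < \alpha$. Since $\limsup_{n} a_n^{1/t_n} = \beta$ and $\liminf_{n} a_n^{1/t_n} = \alpha$, there is an integer $N$ with $\alpha - \epsilon < a_n^{1/t_n} < \beta + \epsilon$ for all $n \geq N$; as $x \mapsto x^{t_n}$ is strictly increasing on $(0,\infty)$, this says $(\alpha-\epsilon)^{t_n} < a_n < (\beta+\epsilon)^{t_n}$ for $n \geq N$. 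Set $D = \max\{\beta+\epsilon,\ (\alpha-\epsilon)^{-1},\ 1\}$, so $D \geq 1$. Then $D^{-t_n} \leq (\alpha-\epsilon)^{t_n}$ and $(\beta+\epsilon)^{t_n} \leq D^{t_n}$, hence $D^{-t_n} < a_n < D^{t_n}$, which yields $|a_n|_\times = \max\{a_n, a_n^{-1}\} \leq D^{t_n}$ for all $n \geq N$.

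Next I would prove that $\prod_{n=1}^{\infty} |a_n|_\times$ converges by means of Theorem 1.3. For $N \leq m < n$ we have $1 \leq \prod_{k=m}^{n} |a_k|_\times \leq D^{\,t_m + \cdots + t_n}$. Since $\sum_k t_k$ converges, its tails $t_m + \cdots + t_n$ tend to $0$ as $m, n \to \infty$, and because $D \geq 1$ the upper bound $D^{\,t_m+\cdots+t_n}$ then tends to $1$. Thus for every $\delta > 0$ there is $n_0 \geq N$ such that $\big| \prod_{k=m}^{n} |a_k|_\times - 1 \big|_{+} < \delta$ whenever $n > m \geq n_0$, so Theorem 1.3 gives convergence of $\prod_{n=N}^{\infty} |a_n|_\times$; multiplying by the positive finite product $\prod_{n=1}^{N-1} |a_n|_\times$ shows $\prod_{n=1}^{\infty} |a_n|_\times$ converges. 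By Lemma 2.4, $\prod_{n=1}^{\infty} a_n$ converges.

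I do not expect a genuine obstacle here; the one point requiring care is the bookkeeping that produces a single base $D \geq 1$ dominating both $a_n$ and $a_n^{-1}$: one must not presume $\beta \geq 1$ or $\alpha \leq 1$, so $D$ has to be taken as the maximum of $\beta+\epsilon$, $(\alpha-\epsilon)^{-1}$ and $1$, and $\epsilon$ must be chosen below $\alpha$ to keep $\alpha-\epsilon$ positive. Equivalently one could run the estimate after taking logarithms --- $t_n \log(\alpha-\epsilon) < \log a_n < t_n \log(\beta+\epsilon)$ forces $\sum_n |\log a_n|_{+} \leq L \sum_n t_n < \infty$ for a suitable constant $L$, so $\sum \log a_n$ converges absolutely and $\prod a_n$ converges --- which is exactly the ``root test'' transcription this section advertises; the $|\cdot|_\times$ route above is the same argument kept inside the framework of Lemma 2.4.
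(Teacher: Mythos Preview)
Your proof is correct and follows essentially the same approach as the paper: both fix constants strictly outside $[\alpha,\beta]$, obtain $(\alpha')^{t_n} < a_n < (\beta')^{t_n}$ for large $n$, and use the vanishing tails of $\sum t_n$ together with Theorem~1.3. The only cosmetic difference is that the paper applies Theorem~1.3 directly to $\prod_{k=m}^{n} a_k$ (sandwiched between $(\alpha')^{\sum t_k}$ and $(\beta')^{\sum t_k}$), whereas you first symmetrize to a single base $D\geq 1$, establish $m$-absolute convergence, and then invoke Lemma~2.4---a slight detour that in fact yields the marginally stronger conclusion that $\prod a_n$ converges $m$-absolutely.
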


\begin{proof}
\ Fix $\alpha^{'}$ and $\beta^{'}$ such that $0< \alpha^{'} <\alpha \leq \beta <\beta^{'} <\infty$. Then there is an integer $n_{0}$ such that $\alpha^{'} < a_{n}^{\frac{1}{t_{n}}} <\beta^{'}$, $\forall$ $n \geq n_{0}$. Then, for $n>m \geq n_{0}$,
\begin{eqnarray*}
 {\alpha^{'}}^{\sum\limits^{n}_{k=m} t_{k}} < \prod\limits^{n}_{k=m} a_{n} <{\beta^{'}}^{\sum\limits^{n}_{k=m} t_{k}}.
\end{eqnarray*}
Now, the proposition follows from Theorem 1.3.
\end{proof}

\begin{cor}
\ Let $(t_{n})_{n=1}^{\infty}$ be a sequence of positive numbers such that $\sum\limits^{\infty}_{n=1} t_{n}$ converges. Let $(a_{n})_{n=1}^{\infty}$ be a sequence of positive numbers such that $a_{n} \geq 1 $, $\forall n$, and such that $\limsup\limits_{n\rightarrow \infty}a_{n}^{\frac{1}{t_{n}}} <\infty$. Then $\prod\limits^{\infty}_{n=1} a_{n}$ converges. Here $``a_{n} \geq 1"$ and $``\limsup\limits_{n\rightarrow \infty}a_{n}^{\frac{1}{t_{n}}} <\infty"$ can be replaced by $``a_{n} \leq 1"$ and $``\liminf\limits_{n\rightarrow \infty}a_{n}^{\frac{1}{t_{n}}} >0"$ to get the same conclusion: $\prod\limits^{\infty}_{n=1} a_{n}$ converges.
\end{cor}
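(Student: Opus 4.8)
The plan is to obtain Corollary 5.2 as a direct specialization of Proposition 5.1: under either of its two hypotheses, the ``missing'' half of the two-sided control $0<\alpha\le a_n^{1/t_n}\le\beta<\infty$ required by Proposition 5.1 comes for free. First I would record the trivial but crucial observation that, because $t_n>0$, the map $x\mapsto x^{1/t_n}$ is order-preserving on $(0,\infty)$; hence $a_n\ge 1$ forces $a_n^{1/t_n}\ge 1$ for every $n$, and $a_n\le 1$ forces $a_n^{1/t_n}\le 1$ for every $n$.

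In the first case ($a_n\ge 1$ and $\limsup_{n\to\infty}a_n^{1/t_n}<\infty$), the preceding remark gives $\liminf_{n\to\infty}a_n^{1/t_n}\ge 1>0$, so the pair $(t_n),(a_n)$ satisfies the hypotheses of Proposition 5.1 with $\alpha\ge 1$ and $\beta<\infty$; Proposition 5.1 then yields convergence of $\prod_{n=1}^{\infty}a_n$. In the second case ($a_n\le 1$ and $\liminf_{n\to\infty}a_n^{1/t_n}>0$), symmetrically $\limsup_{n\to\infty}a_n^{1/t_n}\le 1<\infty$, and Proposition 5.1 applies again.

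If one prefers a self-contained argument, the same conclusion follows by repeating the proof of Proposition 5.1 with only one inequality: in the first case pick $\beta'$ with $\limsup_{n\to\infty}a_n^{1/t_n}<\beta'<\infty$, find $n_0$ with $1\le a_n<{\beta'}^{t_n}$ for all $n\ge n_0$, so that $1\le\prod_{k=m}^{n}a_k<{\beta'}^{\sum_{k=m}^{n}t_k}$ for $n>m\ge n_0$; since $\sum t_n$ converges its tail sums tend to $0$, the right-hand side tends to $1$, and Theorem 1.3 gives convergence. The second case uses instead the lower bound ${\alpha'}^{\sum_{k=m}^{n}t_k}<\prod_{k=m}^{n}a_k\le 1$ with $0<\alpha'<\liminf_{n\to\infty}a_n^{1/t_n}$, and Theorem 1.3 again.

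I do not expect a genuine obstacle: this is a corollary whose content is precisely that a one-sided size restriction on the factors automatically supplies one of the two limit conditions in Proposition 5.1. The only place warranting a sentence of care is the monotonicity of $x\mapsto x^{1/t_n}$ (valid because each $t_n$ is positive), which is what converts ``$a_n\ge 1$'' into a lower bound on $a_n^{1/t_n}$ and ``$a_n\le 1$'' into an upper bound.
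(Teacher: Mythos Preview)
Your proposal is correct and matches the paper's approach: the paper states Corollary 5.2 without proof immediately after Proposition 5.1, so the intended argument is exactly the observation you give---that $a_n\ge 1$ (resp.\ $a_n\le 1$) together with $t_n>0$ forces $\liminf_{n\to\infty}a_n^{1/t_n}\ge 1>0$ (resp.\ $\limsup_{n\to\infty}a_n^{1/t_n}\le 1<\infty$), so Proposition 5.1 applies directly. Your optional self-contained version is just the proof of Proposition 5.1 specialized to one inequality, which is fine but not needed.
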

\begin{thm}
\ Suppose $a_{1}\geq a_{2}\geq... \geq 1$.Then $\prod\limits^{\infty}_{n=1} a_{n}$ converges if and only if $\prod\limits^{\infty}_{k=0} a_{2^{k}}^{2^{k}}= a_{1}^{1}a_{2}^{2}a_{4}^{4}a_{8}^{8}....$ converges.
\end{thm}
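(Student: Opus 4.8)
This is the multiplicative analogue of the Cauchy condensation test. The plan is to mimic the classical proof, where $\sum a_n$ with $a_n$ decreasing converges iff $\sum 2^k a_{2^k}$ converges, by grouping dyadic blocks and using monotonicity to sandwich block products between powers of the condensed factors.

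First I would set up the comparison on the logarithmic side implicitly: write $P_N = \prod_{n=1}^{N} a_n$ and $Q_K = \prod_{k=0}^{K} a_{2^k}^{2^k}$. Since all $a_n \geq 1$, both $(P_N)$ and $(Q_K)$ are nondecreasing, so each converges iff it is bounded above (using the fact, from Theorem 1.3 or directly, that a bounded monotone product of factors $\geq 1$ converges to a positive limit, and an unbounded one fails to converge). Thus the theorem reduces to showing $(P_N)$ is bounded iff $(Q_K)$ is bounded.

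For the two bounds I would group the factors into dyadic blocks $\prod_{n=2^k}^{2^{k+1}-1} a_n$, each having $2^k$ terms. Monotonicity gives, for each $k \geq 0$,
\begin{equation*}
a_{2^{k+1}}^{2^k} \;\leq\; \prod_{n=2^k}^{2^{k+1}-1} a_n \;\leq\; a_{2^k}^{2^k}.
\end{equation*}
Taking the product over $k = 0, 1, \dots, K-1$ and telescoping the indices, the right-hand inequalities give $P_{2^K - 1} \leq \prod_{k=0}^{K-1} a_{2^k}^{2^k} \leq Q_K$, so boundedness of $(Q_K)$ forces boundedness of $(P_N)$ along the subsequence $N = 2^K - 1$, hence for all $N$ since $(P_N)$ is nondecreasing. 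Conversely, the left-hand inequalities give $\prod_{k=0}^{K-1} a_{2^{k+1}}^{2^k} \leq P_{2^K - 1}$; multiplying both sides by $a_1 = a_{2^0}^{2^0}$ and reindexing, one gets something like $a_1 \cdot \big(\prod_{j=1}^{K} a_{2^j}^{2^{j-1}}\big) \leq a_1 P_{2^K - 1}$, and since $a_{2^j}^{2^{j-1}} = (a_{2^j}^{2^j})^{1/2}$, this bounds $\prod_{j=1}^{K} (a_{2^j}^{2^j})^{1/2}$, hence (squaring and reinserting the $a_1$ factor) bounds $Q_K$ in terms of a power of $P_{2^K-1}$ and $a_1$. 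So boundedness of $(P_N)$ forces boundedness of $(Q_K)$.

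I do not expect a serious obstacle here; the main care-point will be bookkeeping with the dyadic index shifts and the factor-of-$2$ in the exponents when passing from the lower block estimate back to $\prod a_{2^k}^{2^k}$, together with making sure the reduction ``monotone product of factors $\geq 1$ converges iff bounded'' is cleanly justified from Theorem 1.3 (equivalently, that $\prod a_n$ with $a_n \geq 1$ diverges to $+\infty$ when unbounded, so the case $p = 0$ cannot occur). Once that reduction is in place, the chain of inequalities above finishes the proof.
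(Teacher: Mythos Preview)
Your proposal is correct and follows essentially the same route as the paper: both reduce to ``monotone product of factors $\geq 1$ converges iff bounded'' and then establish the two-sided dyadic-block comparison, obtaining (in the paper's notation $u_n=P_n$, $v_k=Q_k$) the inequalities $u_n\leq v_k$ for $n<2^k$ and $u_n^2\geq v_k$ for $n>2^k$. The only difference is cosmetic bookkeeping in the reindexing of the lower bound; your squaring step yields exactly the paper's $u_n^2\geq v_k$.
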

\begin{proof}
\ Let $u_{n}=a_{1} a_{2}...a_{n}$, and  $v_{k}=a_{2^{0}}^{2^{0}} a_{2^{1}}^{2^{1}} a_{2^{2}}^{2^{2}}....a_{2^{k}}^{2^{k}}$. For $n<2^{k}$,
\begin{eqnarray*}
 u_{n} \leq a_{1}(a_{2}a_{3})...(a_{2^{k}}a_{2^{k}+1}...a_{2^{k+1}-1}) \leq a_{1}^{1}a_{2}^{2}a_{4}^{4}...a_{2^{k}}^{2^{k}}=v_{k}
\end{eqnarray*}
so that $u_{n} \leq v_{k}$. On the other hand, if $n>2^{k}$,
\begin{eqnarray*}
 u_{n} \geq a_{1}^{\frac{1}{2}}a_{2}(a_{3}a_{4})...(a_{2^{k-1}-1}...a_{2^{k}}) \geq (a_{1}^{1}a_{2}^{2}a_{4}^{4}...a_{2^{k}}^{2^{k}})^{\frac{1}{2}}=v_{k}^{\frac{1}{2}}
\end{eqnarray*}
so that $u_{n}^{2} \geq v_{k}$. These two inequalities establish the result, because $a_{n} \geq 1$, $\forall n$.
\end{proof}
\begin{defn}
\ Suppose $a_{n}>1$, for each $n$. The product $a_{1}a_{2}^{-1}a_{3}a_{4}^{-1}a_{5}a_{6}^{-1}$.... is called an alternating product.
\end{defn}

\begin{thm}
\ If $(a_{n})_{n=1}^{\infty}$ is a strictly decreasing sequence converging to 1, then the alternating product $a_{1}a_{2}^{-1}a_{3}a_{4}^{-1}a_{5}a_{6}^{-1}$... converges.
\end{thm}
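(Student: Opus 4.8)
The plan is to transform the classical proof of the Leibniz alternating series test, replacing sums by products, differences by ratios, and the number $0$ by the number $1$. Write $P_n=\prod_{k=1}^{n}a_k^{(-1)^{k+1}}$ for the $n$-th partial product of the alternating product, so that $P_1=a_1$, $P_2=a_1/a_2$, and $P_{n+1}/P_n=a_{n+1}$ when $n$ is even while $P_{n+1}/P_n=a_{n+1}^{-1}$ when $n$ is odd. All $P_n$ are positive since each $a_k>0$, and since the sequence is strictly decreasing with limit $1$ we have $a_n>1$ for every $n$. First I would study the even-indexed subsequence $(P_{2m})_{m\ge 1}$ and the odd-indexed subsequence $(P_{2m+1})_{m\ge 0}$ separately.

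For the even indices, $P_{2m+2}/P_{2m}=a_{2m+1}/a_{2m+2}>1$ by strict monotonicity, so $(P_{2m})$ is strictly increasing. For the odd indices, $P_{2m+3}/P_{2m+1}=a_{2m+3}/a_{2m+2}<1$, so $(P_{2m+1})$ is strictly decreasing. Moreover $P_{2m+1}/P_{2m}=a_{2m+1}>1$, so $P_{2m}<P_{2m+1}$ for every $m$. Combining these three facts (the classical nested-intervals picture) one gets $P_{2m}<P_{2m'+1}$ for all $m,m'$; in particular $(P_{2m})$ is increasing and bounded above by $P_1=a_1$, and $(P_{2m+1})$ is decreasing and bounded below by $P_2=a_1/a_2>0$. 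Hence both subsequences converge, say $P_{2m}\to L_e$ and $P_{2m+1}\to L_o$, with $L_o\ge a_1/a_2>0$ and $0<L_e\le L_o$.

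To conclude, I would invoke the hypothesis $a_n\to 1$: from $P_{2m+1}=a_{2m+1}P_{2m}$ and $a_{2m+1}\to 1$ we obtain $L_o=L_e$. Denoting the common value by $L$, we have $L\ge a_1/a_2>0$, and since the even- and odd-indexed terms together exhaust all of $(P_n)$, this forces $P_n\to L$. Since $L$ is a positive real, Definition 1.1 says precisely that the alternating product $a_1a_2^{-1}a_3a_4^{-1}\cdots$ converges (to $L$).

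I do not anticipate a real obstacle; the only point that needs genuine attention is securing a strictly positive lower bound for the partial products, so that the limit is a bona fide convergent value in the sense of Definition 1.1 rather than $0$ --- this is exactly what the monotonicity of $(P_{2m+1})$ together with the explicit estimate $P_{2m+1}>P_2=a_1/a_2$ provides. As an alternative one could instead run the whole argument through Theorem 1.3 by bounding $\big|1-\prod_{k=n}^{m}a_k^{(-1)^{k+1}}\big|_{+}$ for $m>n$ using partial-product monotonicity of consecutive pairs, but the monotone-subsequence route is the cleanest transformation of the classical proof.
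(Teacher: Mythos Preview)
Your proof is correct and follows essentially the same approach as the paper's: both are the multiplicative Leibniz argument. The paper groups consecutive factors into $b_k=a_{2k-1}a_{2k}^{-1}>1$, bounds $\prod_{k=1}^{n}b_k=P_{2n}<a_1$ by a telescoping rewrite, and then uses $P_{2n+1}=P_{2n}\,a_{2n+1}$ together with $a_n\to 1$; you instead exhibit both monotone subsequences $(P_{2m})$ and $(P_{2m+1})$ and obtain the same bound $P_{2m}<P_1=a_1$ from the decreasing odd subsequence, which is only a cosmetic difference.
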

\begin{proof}
\ Let $b_{1}= a_{1}a_{2}^{-1}, b_{2}=a_{3}a_{4}^{-1}, b_{3}=a_{5}a_{6}^{-1}$,... .Then $b_{n}>1$, $\forall n$. Hence, $\prod\limits^{\infty}_{n=1} b_{n}$ converges if and only if $(\prod\limits^{n}_{k=1} b_{k})^{\infty}_{n=1}$ is a bounded sequence. Let us observe that $\prod\limits^{n}_{k=1} b_{k}= a_{1}(\frac{a_{2}}{a_{3}})^{-1} (\frac{a_{4}}{a_{5}})^{-1} ...$ $ (\frac{a_{2n-2}}{a_{2n-1}})^{-1}a_{2n}^{-1}<a_{1}$, because $(\frac{a_{2}}{a_{3}})^{-1}, (\frac{a_{4}}{a_{5}})^{-1}, ... (\frac{a_{2n-1}}{a_{2n-1}})^{-1},a_{2n}^{-1}$ are less than 1. So, $\prod\limits^{\infty}_{n=1} b_{n}$ converges. The value of $a_{1}a_{2}^{-1}a_{3}a_{4}^{-1}a_{5}a_{6}^{-1}...a_{m}^{(-1)^{m+1}}$ is $\prod\limits^{\frac{m}{2}}_{k=1} b_{k}$ if $m$ is even, and it is $\Big(\prod\limits^{\frac{m-1}{2}}_{k=1} b_{k}\Big)a_{m}$ if $m$ is odd. So, $a_{1}a_{2}^{-1}a_{3}a_{4}^{-1}$... converges to $\prod\limits^{\infty}_{n=1} b_{n}$, because $a_{m} \rightarrow 1$ as $m\rightarrow \infty$.
\end{proof}

\begin{cor}
\ If $(a_{n})_{n=1}^{\infty}$ is a strictly decreasing sequence in $(0,1)$ which converges to zero, then $(1-a_{1})(1-a_{2})^{-1}(1-a_{3})(1-a_{4})^{-1}$... converges.
\end{cor}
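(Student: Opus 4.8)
The plan is to reduce the statement directly to Theorem 5.5 via the substitution $c_{n}=(1-a_{n})^{-1}$. First I would verify that $(c_{n})_{n=1}^{\infty}$ satisfies the hypotheses of Theorem 5.5: since each $a_{n}\in(0,1)$ we have $1-a_{n}\in(0,1)$, hence $c_{n}>1$ for every $n$; since $(a_{n})$ is strictly decreasing, $(1-a_{n})$ is strictly increasing, so $(c_{n})=((1-a_{n})^{-1})$ is strictly decreasing; and since $a_{n}\to 0$ we get $1-a_{n}\to 1$, whence $c_{n}\to 1$. Thus $(c_{n})$ is a strictly decreasing sequence converging to $1$, so Theorem 5.5 applies.

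Applying Theorem 5.5, the alternating product $c_{1}c_{2}^{-1}c_{3}c_{4}^{-1}\cdots$ converges, say to some $p>0$ (recall that convergence of an infinite product here means that the sequence of partial products tends to a positive real).

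The last step is to observe that, because $c_{k}^{-1}=1-a_{k}$, the $k$-th factor of $(1-a_{1})(1-a_{2})^{-1}(1-a_{3})(1-a_{4})^{-1}\cdots$ is the reciprocal of the $k$-th factor of $c_{1}c_{2}^{-1}c_{3}c_{4}^{-1}\cdots$; consequently, for every $m$, the $m$-th partial product of the former equals the reciprocal of the $m$-th partial product of the latter. Since $p>0$ and inversion is continuous on $(0,\infty)$, these reciprocated partial products converge to $p^{-1}>0$, which is exactly the asserted convergence of $(1-a_{1})(1-a_{2})^{-1}(1-a_{3})(1-a_{4})^{-1}\cdots$.

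There is essentially no obstacle here; the only points that need (routine) care are checking that the three hypotheses of Theorem 5.5 really do transfer under $a_{n}\mapsto(1-a_{n})^{-1}$, and keeping straight that ``converges'' refers to the whole sequence of partial products (both even- and odd-indexed), so that passing to termwise reciprocals preserves convergence and yields a positive limit.
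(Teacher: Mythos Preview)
Your proposal is correct and follows essentially the same route as the paper: the paper's one-line proof simply observes that $((1-a_{n})^{-1})_{n=1}^{\infty}$ is a strictly decreasing sequence converging to $1$ and (implicitly) invokes Theorem 5.5. Your version is more explicit in spelling out the reciprocal step connecting $c_{1}c_{2}^{-1}c_{3}c_{4}^{-1}\cdots$ to the target product, which the paper leaves to the reader.
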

\begin{proof}
\ Here, $((1-a_{n})^{-1})_{n=1}^{\infty}$ is a strictly decreasing sequence, which converges to 1.
\end{proof}

\begin{thm}
\ Let $(a_{n})_{n=1}^{\infty}$ be a sequence of positive numbers such that $\prod\limits^{\infty}_{n=1} a_{n}$ converges to $p$. Let $(t_{n})_{n=1}^{\infty}$ be a sequence of positive numbers such that $t_{1}+t_{2}+... =+\infty$. Let $u_{n}=\prod\limits^{n}_{k=1} a_{k}$, and $\sigma_{n}= (u_{1}^{t_{1}} u_{2}^{t_{2}}...u_{n}^{t_{n}})^{\frac{1}{t_{1}+t_{2}+...+t_{n}}}$, $\forall n$. Then $\sigma_{n}$ converges to $p$ as $n \rightarrow \infty$.
\end{thm}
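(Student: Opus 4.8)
The plan is to obtain this as an immediate consequence of Lemma 2.5. By hypothesis $\prod_{n=1}^{\infty}a_{n}$ converges to $p$, and by the convention adopted in this paper (Remark 1.2 excludes $p=0$ and $p=+\infty$) we have $p>0$. Hence the partial products $u_{n}=\prod_{k=1}^{n}a_{k}$ form a sequence of positive reals with $u_{n}\to p$, and $p$ is a \emph{positive} number. Now apply Lemma 2.5 with the sequence $(u_{n})_{n=1}^{\infty}$ in the role of the sequence $(a_{n})_{n=1}^{\infty}$ of that lemma and with $K=p$; the remaining hypothesis $t_{1}+t_{2}+\cdots=+\infty$ is exactly what is assumed here. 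Lemma 2.5 then yields $\sigma_{n}=(u_{1}^{t_{1}}u_{2}^{t_{2}}\cdots u_{n}^{t_{n}})^{1/(t_{1}+t_{2}+\cdots+t_{n})}\to p$, which is the assertion.

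For a self-contained argument one simply replays the proof of Lemma 2.5 verbatim, reading $u_{n}$ for $a_{n}$: reduce to the case $p=1$; given $\epsilon>0$, choose $k$ with $|u_{n}|_{\times}<1+\epsilon$ for all $n\geq k$ and a uniform bound $M\geq 1$ with $|u_{n}|_{\times}\leq M$ for all $n$; split $u_{1}^{t_{1}}\cdots u_{n}^{t_{n}}$ into its first $k$ factors and the remaining ones, use the submultiplicativity of $|\cdot|_{\times}$, and observe that the first block contributes $M^{(t_{1}+\cdots+t_{k})/(t_{1}+\cdots+t_{n})}\to 1$ (because $t_{1}+\cdots+t_{n}\to\infty$) while the second block contributes at most $1+\epsilon$ in the limit. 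Hence $\sigma_{n}\to 1$. This is precisely the $|\cdot|_{\times}$-transformed version of the classical regularity of weighted means: if $s_{n}\to s$ and $t_{n}>0$ with $\sum t_{n}=\infty$, then $(t_{1}s_{1}+\cdots+t_{n}s_{n})/(t_{1}+\cdots+t_{n})\to s$.

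I do not expect any genuine obstacle here, since the whole computational content is already packaged in Lemma 2.5. The only point that must be noted is that the limit $p$ of the infinite product is strictly positive, so that $(u_{n})$ meets the "converges to a positive number" hypothesis of that lemma; this is built into the paper's definition of convergence of products. One may also remark, for context, that the theorem and Lemma 2.5 are essentially equivalent reformulations of one another: given any sequence $(v_{n})$ of positive reals with $v_{n}\to K>0$, setting $a_{1}=v_{1}$ and $a_{n}=v_{n}/v_{n-1}$ for $n\geq 2$ makes $\prod_{k=1}^{n}a_{k}=v_{n}$, so that the conclusion of the theorem for $(a_{n})$ recovers Lemma 2.5 for $(v_{n})$.
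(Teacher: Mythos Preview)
Your proposal is correct and follows essentially the same approach as the paper. The paper's own proof does not cite Lemma~2.5 but instead reproves it inline: it sets $v_{n}=u_{n}/p$, $\tau_{n}=\sigma_{n}/p$, and then carries out exactly the splitting-and-bounding argument you describe in your ``self-contained'' paragraph, concluding $|\tau_{n}|_{\times}\to 1$ and hence $\sigma_{n}\to p$; your direct invocation of Lemma~2.5 is the cleaner route to the same conclusion.
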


\begin{proof}
\ Let $v_{n}=\frac{u_{n}}{p}$ and $\tau_{n}=\frac{\sigma_{n}}{p}$, $\forall n$. Then $(v_{1}^{t_{1}} v_{2}^{t_{2}}...v_{n}^{t_{n}})^{\frac{1}{t_{1}+t_{2}+...+t_{n}}}=\frac{\sigma_{n}}{p}$, $\forall n$. Choose a finite positive number A such that $|v_{n}|_{\times} \leq A$, $\forall n$. Given $\epsilon >0$, choose an integer $k$ such that $|v_{n}|_{\times} \leq 1+\epsilon$, $\forall n>k$. Then
\begin{eqnarray*}
1 \leq |\tau_{n}|_{\times} = \Big| \frac{\sigma_{n}}{p} \Big|_{\times} \leq  (|v_{1}|_{\times}^{t_{1}} |v_{2}|_{\times}^{t_{2}}...|v_{k}|_{\times}^{t_{k}})^{\frac{1}{t_{1}+t_{2}+...+t_{n}}} (|v_{k+1}|_{\times}^{t_{k+1}}...|v_{n}|_{\times}^{t_{n}})^{\frac{1}{t_{1}+t_{2}+...+t_{n}}}\\
 \leq A^{\frac{t_{1}+t_{2}+...+t_{k}}{t_{1}+t_{2}+...+t_{n}}} (1+\epsilon)^{\frac{t_{k+1}+t_{2}+...+t_{n}}{t_{1}+t_{2}+...+t_{n}}}.
\end{eqnarray*}
\ Thus $1 \leq |\tau_{n}|_{\times} \leq 1+2\epsilon$ for sufficiently large $n$, because $ t_{1}+t_{2}+...=  \infty$. This proves that $ \lim\limits_{n \rightarrow\infty} |\tau_{n}|_{\times}=1= \lim\limits_{n \rightarrow\infty} \tau_{n}$. So, $ \lim\limits_{n \rightarrow \infty} \sigma_{n}=p. $
\end{proof}
\begin{cor}
\ Let $(a_{n})_{n=1}^{\infty}$ be a sequence of positive numbers such that $\prod\limits^{\infty}_{n=1} a_{n}$ converges to $p$. Let $u_{n}=\prod\limits^{n}_{k=1} a_{k}$, and $\sigma_{n}= (u_{1} u_{2}...u_{n})^{\frac{1}{n}}$, $\forall n$. Then $\sigma_{n}$ converges to $p$ as $n \rightarrow \infty$.
\end{cor}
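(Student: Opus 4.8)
The plan is to recognize Corollary 5.10 as the special case of Theorem 5.9 obtained by taking $t_n = 1$ for every $n$. Indeed, the constant sequence $t_n = 1$ consists of positive numbers and satisfies $t_1 + t_2 + \cdots = \sum_{n=1}^\infty 1 = +\infty$, so the hypotheses of Theorem 5.9 are met whenever $\prod_{n=1}^\infty a_n$ converges to $p$. With this choice, $t_1 + t_2 + \cdots + t_n = n$, and the weighted geometric mean $\sigma_n = (u_1^{t_1} u_2^{t_2} \cdots u_n^{t_n})^{1/(t_1 + \cdots + t_n)}$ appearing in Theorem 5.9 collapses precisely to $(u_1 u_2 \cdots u_n)^{1/n}$, which is the $\sigma_n$ of the corollary.

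So the first and only real step is to verify this substitution and then invoke Theorem 5.9 to conclude $\sigma_n \to p$. No genuine obstacle arises: the corollary is a pure specialization, the multiplicative counterpart of the classical fact that an ordinary convergent sequence is Cesàro summable to the same limit, with the arithmetic mean replaced by the geometric mean.

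As an alternative route, one could instead apply Lemma 2.5 directly to the sequence $(u_n)_{n=1}^\infty$: since $u_n \to p$ with $p > 0$, and with $t_n = 1$ so that $\sum_{n=1}^\infty t_n = +\infty$, Lemma 2.5 yields $(u_1^{1} u_2^{1} \cdots u_n^{1})^{1/n} \to p$, i.e.\ $\sigma_n \to p$. Either reference suffices, and I would present the deduction from Theorem 5.9 as the primary argument since it is the immediately preceding result. The proof is therefore one short paragraph; the substantive content has already been carried out in the proofs of Theorem 5.9 (and Lemma 2.5), which handle the estimates via the multiplicative modulus function and the divergence of $\sum t_n$.
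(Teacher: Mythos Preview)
Your proposal is correct and matches the paper's intent: the corollary is stated immediately after Theorem~5.7 (the weighted geometric-mean result with weights $t_n$) with no separate proof, and is exactly its specialization to $t_n=1$, just as you say. Note a numbering slip: the theorem you invoke is Theorem~5.7 in the paper (not 5.9), and the corollary is 5.8; your alternative via Lemma~2.5 is also valid and is in fact how the paper later derives the closely related conclusion in Theorem~7.5.
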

\begin{thm}
\ Suppose $(b_{n})_{n=1}^{\infty}$ be a sequence of positive reals such that $\prod\limits^{\infty}_{n=1} \frac{b_{n}}{b_{n+1}}$ converges m-absolutely and such that $b_{n} \rightarrow 1$ as $n\rightarrow\infty$. Suppose $(a_{n})_{n=1}^{\infty}$ be a sequence of real numbers such that $\Big\{\sum\limits^{m}_{k=n}a_{k}: m=1,2,..., n=1,2,..., n \leq m \Big\}$ is a bounded set. Then $\prod\limits^{\infty}_{k=1}b_{k}^{a_{k}}$ converges.
\end{thm}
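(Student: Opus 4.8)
The plan is to mimic the proof of Dirichlet's test for series by carrying out Abel's summation by parts in multiplicative form. Put $C=\sup\big\{|\sum_{k=n}^{m}a_{k}|_{+}:1\le n\le m\big\}$, which is finite by hypothesis. Fix $m$, and for $j\ge m$ set $A_{j}=\sum_{k=m}^{j}a_{k}$, with the convention $A_{m-1}=0$; then $a_{k}=A_{k}-A_{k-1}$ and $|A_{j}|_{+}\le C$ for every $j\ge m-1$. The first key step is the multiplicative Abel identity
\[
\prod_{k=m}^{n}b_{k}^{a_{k}}=\Bigg(\prod_{k=m}^{n-1}\Big(\frac{b_{k}}{b_{k+1}}\Big)^{A_{k}}\Bigg)b_{n}^{A_{n}},\qquad n>m,
\]
obtained by writing $b_{k}^{a_{k}}=b_{k}^{A_{k}}b_{k}^{-A_{k-1}}$, reindexing the second family of factors as $b_{k+1}^{-A_{k}}$, and using $A_{m-1}=0$ to drop the boundary term; I would verify it by a direct telescoping computation.

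The second step estimates the right-hand side with the multiplicative modulus. For $x>0$ and real $t$ one has $|x^{t}|_{\times}=|x|_{\times}^{|t|_{+}}$, and since $|x|_{\times}\ge 1$ this is at most $|x|_{\times}^{C}$ whenever $|t|_{+}\le C$. Combining this with the submultiplicativity $|xy|_{\times}\le|x|_{\times}|y|_{\times}$ of Definition 2.1 and the identity above gives
\[
\Big|\prod_{k=m}^{n}b_{k}^{a_{k}}\Big|_{\times}\le\Bigg(\prod_{k=m}^{n-1}\Big|\frac{b_{k}}{b_{k+1}}\Big|_{\times}\Bigg)^{C}|b_{n}|_{\times}^{C}.
\]

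For the third step, fix $\epsilon>0$. Since $\prod_{k=1}^{\infty}\big|\frac{b_{k}}{b_{k+1}}\big|_{\times}$ converges, Theorem 1.3 yields an integer $n_{1}$ with $1\le\prod_{k=m}^{n-1}\big|\frac{b_{k}}{b_{k+1}}\big|_{\times}<1+\delta$ for all $n_{1}\le m<n$; since $b_{n}\to1$ we have $|b_{n}|_{\times}\to1$, so there is $n_{2}$ with $|b_{n}|_{\times}<1+\delta$ for $n\ge n_{2}$. Choosing $\delta>0$ so small that $(1+\delta)^{2C}<1+\epsilon$ and setting $n_{0}=\max\{n_{1},n_{2}\}$, the estimate of the second step gives $(1+\epsilon)^{-1}<\prod_{k=m}^{n}b_{k}^{a_{k}}<1+\epsilon$, hence $\big|\prod_{k=m}^{n}b_{k}^{a_{k}}-1\big|_{+}<\epsilon$, for all $n>m\ge n_{0}$. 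By Theorem 1.3, $\prod_{k=1}^{\infty}b_{k}^{a_{k}}$ converges, which is the assertion.

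The main obstacle is the first step: formulating Abel's summation by parts multiplicatively and, crucially, anchoring the partial sums $A_{j}$ at the lower index $m$ rather than at $1$, so that precisely the quantities $\sum_{k=n}^{m}a_{k}$ controlled by the hypothesis appear as exponents of the factors $b_{k}/b_{k+1}$ and of $b_{n}$. After that, everything reduces to the elementary facts $|x^{t}|_{\times}=|x|_{\times}^{|t|_{+}}$, submultiplicativity of $|\cdot|_{\times}$, and the Cauchy-type criterion of Theorem 1.3, exactly as in the classical proof of Dirichlet's test.
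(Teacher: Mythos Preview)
Your argument is correct and follows essentially the same route as the paper: a multiplicative Abel summation by parts turning $\prod_{k=m}^{n}b_{k}^{a_{k}}$ into a product of factors $\big(\tfrac{b_{k}}{b_{k+1}}\big)^{A_{k}}$ times a boundary term $b_{n}^{A_{n}}$, followed by the bound via $|\cdot|_{\times}$ and Theorem~1.3. The only cosmetic difference is that you anchor the partial sums at $m$ from the outset and phrase the estimate as a one-sided bound on $|\cdot|_{\times}$, whereas the paper writes the identity first for $\prod_{k=1}^{n}$ as a two-sided inequality and then remarks that the same holds for tails; the substance is identical.
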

\begin{proof}
\ Suppose $\Big|\sum\limits^{m}_{k=n}a_{k}\Big|_{+} \leq M$, $\forall$ $n=1,2,...$, $\forall m=1,2,...,$ subject to the condition $n \leq m$, for some $M>0$. Note that, for $n>2$,
\begin{eqnarray*}
 b_{1}^{a_{1}} b_{2}^{a_{2}}...b_{n}^{a_{n}}=\Big(\frac{b_{1}}{b_{2}}\Big)^{a_{1}} \Big(\frac{b_{2}}{b_{3}}\Big)^{a_{1}+a_{2}}...\Big(\frac{b_{n-1}}{b_{n}}\Big)^{a_{1}+a_{2}+...+a_{n-1}} b_{n}^{a_{1}+a_{2}+...+a_{n}}
\end{eqnarray*}
Then
\begin{eqnarray*}
 \Big(\Big|\frac{b_{1}}{b_{2}}\Big|_{\times} \Big|\frac{b_{2}}{b_{3}}\Big|_{\times}...\Big|\frac{b_{n-1}}{b_{n}}\Big|_{\times} \Big)^{-M} |b_{n}|_{\times}^{-M} \leq b_{1}^{a_{1}} b_{2}^{a_{2}}...b_{n}^{a_{n}} \leq \Big(\Big|\frac{b_{1}}{b_{2}}\Big|_{\times} \Big|\frac{b_{2}}{b_{3}}\Big|_{\times}...\Big|\frac{b_{n-1}}{b_{n}}\Big|_{\times}\Big)^{M} |b_{n}|_{\times}^{M}.
\end{eqnarray*}
\ One can derive similar inequalities for $b_{m}^{a_{m}} b_{m+1}^{a_{m+1}}...b_{n}^{a_{n}}$, and for $m<n$, and then Theorem 1.3 can be applied to derive convergence of $\prod\limits^{\infty}_{k=1} {b_{k}}^{a_{k}}$.
\end{proof}
\ This Theorem 5.9 gives a motivation for concepts and results to be given in Sections 6 and 7. Theorem 5.7 and Theorem 5.9 of this Section 5 are comparable with the corresponding classical summability results, Theorem 8.48 in \cite{a1} and Theorem 8.27 in \cite{a1}, which attributed to the mathematicians Cesaro and Abel, respectively.
\section{Matrices and Multiplicability}
\ Let $(0,\infty)^{n}$ denote the cartesian product $\prod\limits^{n}_{i=1} X_{i}$ in which each $X_{i}=(0,\infty)$, multiplication is defined coordinatewisely, and elements are written as transpose of row vectors $(x_{1},x_{2},...,x_{n})$. More explicitly,  \\
\begin{equation*}\label{eq:matrixeqn}
 \begin{pmatrix}
 x_{1} \\
 x_{2} \\
 . \\
 . \\
 . \\
 x_{n} \\
 \end{pmatrix}
 \begin{pmatrix}
 y_{1} \\
 y_{2} \\
 . \\
 . \\
 . \\
 y_{n} \\
 \end{pmatrix}
 =
 \begin{pmatrix}
 x_{1}y_{1} \\
 x_{2}y_{2} \\
 . \\
 . \\
 . \\
 x_{n}y_{n} \\
 \end{pmatrix}
 \end{equation*}
or
$(x_{1},x_{2},...,x_{n})^{T} (y_{1},y_{2},...,y_{n})^{T}=(x_{1}y_{1},x_{2}y_{2},...,x_{n}y_{n})^{T}$ in $(0,\infty)^{n}$. Let $A=(a_{ij})$ be a matrix of order $m\times n$ with real entries $a_{ij}$. Let us define \\
\begin{equation*}\label{eq:matrixeqn}
 A*
 \begin{pmatrix}
 x_{1} \\
 x_{2} \\
 . \\
 . \\
 . \\
 x_{n} \\
 \end{pmatrix}
 =
 \begin{pmatrix}
 x_{1}^{a_{11}}x_{2}^{a_{12}}...x_{n}^{a_{1n}} \\
 x_{1}^{a_{21}}x_{2}^{a_{22}}...x_{n}^{a_{2n}} \\
 . \\
 . \\
 . \\
 x_{1}^{a_{m1}}x_{2}^{a_{m1}}...x_{n}^{a_{mn}}\\
 \end{pmatrix}
 \end{equation*}
 such that A is a multiplication preserving function from $(0,\infty)^{n}$ to $(0,\infty)^{m}$.  Then, for a given matrix $B=(b_{ij})$ of order $k\times m$ with real entries, it can be verified that \\
 \begin{equation*}\label{eq:matrixeqn}
 B*
 \begin{pmatrix}
 A*
 \begin{pmatrix}
 x_{1} \\
 x_{2} \\
 . \\
 . \\
 . \\
 x_{n} \\
 \end{pmatrix}
 \end{pmatrix}
 =
 (BA)*
 \begin{pmatrix}
 x_{1} \\
 x_{2} \\
 . \\
 . \\
 . \\
 x_{n} \\
 \end{pmatrix},
 \end{equation*} \\
 $\forall$ $(x_{1},x_{2},...,x_{n})^{T} \in (0,\infty)^{n}$, where $BA$ is the usual matrix multiplication. Let $C$ be another matrix of order $k\times m$ with real entries. Then the following is true for every $(x_{1},x_{2},...,x_{n})^{T} \in (0,\infty)^{n}$:\\
  \begin{equation*}\label{eq:matrixeqn}
 (C+B)*
 \begin{pmatrix}
 A*
 \begin{pmatrix}
 x_{1} \\
 x_{2} \\
 . \\
 . \\
 . \\
 x_{n} \\
 \end{pmatrix}
 \end{pmatrix}
 =
 \begin{pmatrix}
 C*
 \begin{pmatrix}
 A*
 \begin{pmatrix}
 x_{1} \\
 x_{2} \\
 . \\
 . \\
 . \\
 x_{n} \\
 \end{pmatrix}
 \end{pmatrix}
 \end{pmatrix}
 \begin{pmatrix}
 B*
 \begin{pmatrix}
 A*
 \begin{pmatrix}
 x_{1} \\
 x_{2} \\
 . \\
 . \\
 . \\
 x_{n} \\
 \end{pmatrix}
 \end{pmatrix}
 \end{pmatrix}.
 \end{equation*}
 Also,
 \begin{equation*}\label{eq:matrixeqn}
 (C+B)*
 \begin{pmatrix}
 y_{1} \\
 y_{2} \\
 . \\
 . \\
 . \\
 y_{m} \\
 \end{pmatrix}
 =
 \begin{pmatrix}
 C*
 \begin{pmatrix}
 y_{1} \\
 y_{2} \\
 . \\
 . \\
 . \\
 y_{m} \\
 \end{pmatrix}
 \end{pmatrix}
 \begin{pmatrix}
 B*
 \begin{pmatrix}
 y_{1} \\
 y_{2} \\
 . \\
 . \\
 . \\
 y_{m} \\
 \end{pmatrix}
 \end{pmatrix},
 \end{equation*}\\
 $\forall$ $(y_{1},y_{2},...,y_{m})^{T} \in (0,\infty)^{m}$. Let $D$ and $E$ be matrices of order $n\times k$ with real entries. Then it can be verified that
 \begin{equation*}\label{eq:matrixeqn}
 A*
 \begin{pmatrix}
 (D+E)*
 \begin{pmatrix}
 z_{1} \\
 z_{2} \\
 . \\
 . \\
 . \\
 z_{k} \\
 \end{pmatrix}
 \end{pmatrix}
 =
 \begin{pmatrix}
 A*
 \begin{pmatrix}
 D*
 \begin{pmatrix}
 z_{1} \\
 z_{2} \\
 . \\
 . \\
 . \\
 z_{k} \\
 \end{pmatrix}
 \end{pmatrix}
 \end{pmatrix}
 \begin{pmatrix}
 A*
 \begin{pmatrix}
 E*
 \begin{pmatrix}
 z_{1} \\
 z_{2} \\
 . \\
 . \\
 . \\
 z_{k} \\
 \end{pmatrix}
 \end{pmatrix}
 \end{pmatrix}
 \end{equation*}\\
 $\forall$ $(z_{1},z_{2},...,z_{k})^{T} \in (0,\infty)^{k}$.
 \ Thus, there are interesting matrix operations. With this familiarity of the operation $*$ for finite matrices, let us extend the same for infinite matrices or double sequences which are associated with summability. Corollary 5.8 is also a counterpart of a summability method.
\begin{prop}
\ Let $(a_{m,n})_{m,n=1}^{\infty}$ be a double sequence of positive numbers such that $\sum\limits^{\infty}_{n=1} a_{m,n} \leq M$, for every $m$, for some $M>0$, and such that $\lim \limits_{m\rightarrow \infty}a_{m,n}=0$, for every $n$. Then, for every sequence $(x_{n})_{n=1}^{\infty}$ of positive reals for which $\lim \limits_{n\rightarrow \infty}x_{n}=p>0$ exists, $\lim \limits_{m\rightarrow \infty} \prod\limits^{\infty}_{n=1} (\frac{x_{n}}{p})^{a_{m,n}}=1$. Equivalently, $\lim \limits_{m\rightarrow \infty} \prod\limits^{\infty}_{n=1} {x_{n}}^{a_{m,n}}=1$, whenever $\lim \limits_{n\rightarrow \infty}x_{n}=1$ and $x_{n}>0$, $\forall n$.
\end{prop}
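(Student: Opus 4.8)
The plan is to transport the classical proof that a nonnegative Toeplitz-type matrix with bounded row sums and columnwise-vanishing entries carries null sequences to null sequences, but with the additive modulus replaced throughout by the multiplicative modulus $|\cdot|_\times$ of Definition 2.1, using the two basic inequalities $|x|_\times^{-1}\le x\le|x|_\times$ and $|xy|_\times\le|x|_\times|y|_\times$ together with Lemma 2.4 and Theorem 1.3. First I would observe that the two displayed assertions are equivalent: applying the first to the sequence $(x_n/p)_{n=1}^{\infty}$ (which still converges, now to $1$) yields the second, while the first is exactly the special case $p=1$ of the second applied to $(x_n/p)$. Note this detour is needed precisely because $\sum_n a_{m,n}$ need not converge to a fixed value, so one cannot simply factor out a power of $p$. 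Hence it suffices to treat the case $p=1$: $x_n>0$, $x_n\to1$, and the goal is $P_m:=\prod_{n=1}^{\infty}x_n^{a_{m,n}}\to1$ as $m\to\infty$.

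Before studying the limit in $m$, one must check that each $P_m$ is a genuine convergent product. Since $x_n\to1$, fix $A\ge1$ with $|x_n|_\times\le A$ for all $n$. For a fixed $m$ the hypothesis $\sum_{n=1}^{\infty}a_{m,n}\le M<\infty$ gives that the tail $\rho_m(N):=\sum_{n\ge N}a_{m,n}$ tends to $0$ as $N\to\infty$; therefore, for $N'>N$, $1\le\prod_{k=N}^{N'}|x_k|_\times^{a_{m,k}}\le A^{\rho_m(N)}$, which lies within $\epsilon$ of $1$ once $N$ is large. By Theorem 1.3 the product $\prod_n|x_n|_\times^{a_{m,n}}$ converges, so by Lemma 2.4 the product $\prod_n x_n^{a_{m,n}}$ converges m-absolutely, hence converges, and $|P_m|_\times\le\prod_n|x_n|_\times^{a_{m,n}}$.

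Now fix $\epsilon>0$. Since $x_n\to1$, choose $k_0$ with $|x_n|_\times<1+\epsilon$ for all $n>k_0$, and split $P_m=Q_mR_m$ with $Q_m=\prod_{n=1}^{k_0}x_n^{a_{m,n}}$ and $R_m=\prod_{n=k_0+1}^{\infty}x_n^{a_{m,n}}$. For the tail, $|R_m|_\times\le\prod_{n>k_0}|x_n|_\times^{a_{m,n}}\le(1+\epsilon)^{\sum_{n>k_0}a_{m,n}}\le(1+\epsilon)^M$ for every $m$. For the head, $|Q_m|_\times\le\prod_{n=1}^{k_0}|x_n|_\times^{a_{m,n}}\le A^{\sum_{n=1}^{k_0}a_{m,n}}$, and since $\sum_{n=1}^{k_0}a_{m,n}$ is a finite sum each of whose terms tends to $0$ as $m\to\infty$, there is $m_0$ with $|Q_m|_\times<1+\epsilon$ for all $m\ge m_0$. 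Combining via $|P_m|_\times\le|Q_m|_\times|R_m|_\times$ and $|P_m|_\times\ge1$ gives $1\le|P_m|_\times\le(1+\epsilon)^{M+1}$ for all $m\ge m_0$. As $\epsilon>0$ is arbitrary and $M$ is fixed, this forces $|P_m|_\times\to1$, whence $P_m\to1$ by the squeeze $|P_m|_\times^{-1}\le P_m\le|P_m|_\times$, which is the desired conclusion.

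The point requiring care — and where the argument genuinely departs from a naive ``make every piece small'' strategy — is the tail estimate: $|R_m|_\times$ cannot be made close to $1$, since the exponent $\sum_{n>k_0}a_{m,n}$ may remain near $M$; it can only be bounded by $(1+\epsilon)^M$. This is precisely the multiplicative shadow of the classical bound $\bigl|\sum_{n>k_0}a_{m,n}y_n\bigr|\le\epsilon M$, and it is harmless because $\epsilon$ is arbitrary while $M$ is a fixed constant. The only other delicate bookkeeping is the order of limits: the cut index $k_0$ must be selected first, depending only on $\epsilon$ and on $(x_n)$, and only afterwards is $m$ sent to infinity to dispose of the finitely many head factors.
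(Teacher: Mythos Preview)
Your proof is correct and follows essentially the same head/tail splitting strategy as the paper's own argument: cut at an index beyond which $|x_n|_\times$ is close to $1$, bound the tail uniformly in $m$ via the row-sum bound $M$, and kill the finite head by letting $m\to\infty$ using the columnwise hypothesis. The only cosmetic differences are that the paper pre-normalizes the threshold to $(1+\epsilon)^{1/(2M)}$ so as to land exactly on $(1+\epsilon)^{\pm1}$ at the end, whereas you allow $(1+\epsilon)^{M+1}$ and absorb the fixed exponent into the arbitrariness of $\epsilon$; and you make the well-definedness of each $P_m$ explicit via Lemma~2.4, which the paper handles implicitly through its Cauchy-type double inequality.
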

\begin{proof}
\ Suppose $(x_{n})_{n=1}^{\infty}$ be a sequence of positive reals such that $x_{n}\rightarrow 1$ as $n \rightarrow\infty$. Fix $\epsilon >0$. Find an integer $n_{0}$ such that $|x_{k}|_{\times}<(1+\epsilon)^{\frac{1}{2M}}$, $\forall$ $k \geq n_{0}$. Then, for $l>k \geq n_{0}$,
\begin{eqnarray*}
(1+\epsilon)^{\frac{-1}{2}} < \prod \limits^{l}_{s=k} |x_{s}|_{\times}^{-a_{m,s}} \leq \prod\limits^{l}_{s=k} {x_{s}}^{a_{m,s}} \leq \prod \limits^{l}_{s=k} |x_{s}|_{\times}^{a_{m,s}}< (1+\epsilon)^{\frac{1}{2}},
\end{eqnarray*}
$\forall$ $m$, and hence
\begin{eqnarray*}
(1+\epsilon)^{\frac{-1}{2}} \leq \prod\limits^{\infty}_{n=n_{0}+1} {x_{n}}^{a_{m,n}} \leq (1+\epsilon)^{\frac{1}{2}},
\end{eqnarray*}
$\forall$ $m$. Find an integer $m_{0}$ such that
\begin{eqnarray*}
(1+\epsilon)^{\frac{-1}{2}} < \prod \limits^{n_{0}}_{k=1} |x_{k}|_{\times}^{-a_{m,k}} \leq \prod\limits^{n_{0}}_{k=1} {x_{k}}^{a_{m,k}} \leq \prod \limits^{n_{0}}_{k=1} |x_{k}|_{\times}^{a_{m,k}}< (1+\epsilon)^{\frac{1}{2}},
\end{eqnarray*}
$\forall$ $m \geq m_{0}$. Thus, for $m \geq m_{0}$,
\begin{eqnarray*}
(1+\epsilon)^{-1} \leq \prod\limits^{n_{0}}_{n=1} {x_{n}}^{a_{m,n}} \prod\limits^{\infty}_{n=n_{0}+1} {x_{n}}^{a_{m,n}} \leq (1+\epsilon).
\end{eqnarray*}
That is, for given $\epsilon >0$, there is an integer $m_{0}$ such that
\begin{eqnarray*}
(1+\epsilon)^{-1} \leq \prod\limits^{\infty}_{n=1} {x_{n}}^{a_{m,n}} \leq (1+\epsilon),
\end{eqnarray*}
$\forall$ $m \geq m_{0}$. This proves that $\lim \limits_{m\rightarrow \infty} \prod\limits^{\infty}_{n=1} {x_{n}}^{a_{m,n}}=1$.
\end{proof}
\section{Power Products}
\ Corresponding to power series, it is possible to introduce power products.
\begin{defn}
\ Let $(a_{n})_{n=1}^{\infty}$ be a sequence of positive reals. A formal power product is $a_{0}{a_{1}}^{x}{a_{2}}^{x^{2}}{a_{3}}^{x^{3}}$... or $\prod\limits^{\infty}_{n=0} {a_{n}}^{x^{n}}$.
\end{defn}
\begin{rem}
\ This formal power product converges to a value, if $|x|_{+}< \limsup \limits_{n \rightarrow\infty} |\log a_{n}|_{+}^{\frac{1}{n}}$. This can be derived from the corresponding power series obtained by logarithmic transformation. This is not our method of deriving results. Let us now recall Theorem 5.9 in another version.
\end{rem}
\begin{thm}
\ Suppose $(b_{n})_{n=1}^{\infty}$ be as in Theorem 5.9. Then the power product $\prod\limits^{\infty}_{k=1} {b_{k}}^{x^{k}}$ converges for every real $x \in [-1,1)$.
\begin{proof}
\ Let $x \in [-1,1)$. Write $a_{k}=x^{k}$, for $k=1,2,...$. Then $\Big\{\sum\limits^{m}_{k=n}a_{k}: m=1,2,..., n=1,2,..., n \leq m \Big\}$ is a bounded set. By Theorem 5.9, $\prod\limits^{\infty}_{k=1} {b_{k}}^{x^{k}}$ converges.
\end{proof}
\end{thm}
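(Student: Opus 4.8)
The plan is to reduce the statement directly to Theorem 5.9, since the hypothesis imposed on $(b_n)_{n=1}^{\infty}$ is literally the hypothesis of that theorem (namely that $\prod_{n=1}^{\infty}\frac{b_n}{b_{n+1}}$ converges $m$-absolutely and $b_n\to 1$). First I would fix $x\in[-1,1)$ and set $a_k=x^k$ for $k=1,2,\dots$. The only thing that requires verification is that the collection $\left\{\sum_{k=n}^{m}a_k : 1\le n\le m\right\}$ is a bounded set; once this is in hand, applying Theorem 5.9 to the sequences $(b_k)_{k=1}^{\infty}$ and $(a_k)_{k=1}^{\infty}$ yields convergence of $\prod_{k=1}^{\infty}b_k^{x^k}$ immediately.

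For the boundedness, I would split into two cases. If $x=-1$, then $\sum_{k=n}^{m}(-1)^k\in\{-1,0,1\}$, so the bound is trivial. If $x\in(-1,1)$, summing the geometric progression gives $\sum_{k=n}^{m}x^k=\frac{x^n-x^{m+1}}{1-x}$, hence $\left|\sum_{k=n}^{m}x^k\right|_{+}\le\frac{|x|^n+|x|^{m+1}}{1-x}\le\frac{2}{1-x}$, a bound independent of $n$ and $m$. So in all cases the relevant set of partial sums is bounded, as needed.

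There is no genuine obstacle here: the substantive work is entirely contained in Theorem 5.9, and this statement is in effect a corollary recast in the language of power products. The one remark worth making is why the domain is the half-open interval $[-1,1)$ rather than $[-1,1]$: at $x=1$ one has $\sum_{k=n}^{m}1=m-n+1$, which is unbounded, so the hypothesis of Theorem 5.9 fails and the reduction breaks down (indeed $\prod_{k=1}^{\infty}b_k^{1}=\prod_{k=1}^{\infty}b_k$ need not converge under the stated assumptions on $(b_n)$). Thus $[-1,1)$ is the natural and sharp domain for this argument, and I would note this explicitly after invoking Theorem 5.9.
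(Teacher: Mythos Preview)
Your proposal is correct and follows exactly the same approach as the paper: set $a_k=x^k$, check that the partial sums $\sum_{k=n}^{m}a_k$ are bounded, and invoke Theorem~5.9. Your write-up simply fills in the boundedness verification (by case-splitting on $x=-1$ versus $x\in(-1,1)$) and explains why $x=1$ is excluded, details the paper leaves implicit.
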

\begin{cor}
\ Let $(b_{n})_{n=1}^{\infty}$ be a strictly increasing sequence of positive reals converging to 1. Then the power product $\prod\limits^{\infty}_{k=1} {b_{k}}^{x^{k}}$ converges for every real $x \in [-1,1)$.
\end{cor}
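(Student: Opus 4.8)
The plan is to obtain this as an immediate consequence of Theorem 7.4, so the only real task is to verify that a strictly increasing sequence of positive reals converging to $1$ satisfies the hypotheses imposed on $(b_{n})_{n=1}^{\infty}$ in Theorem 5.9 (hence in Theorem 7.4). Those hypotheses are: $b_{n}\rightarrow 1$ as $n\rightarrow\infty$, and $\prod\limits_{n=1}^{\infty}\frac{b_{n}}{b_{n+1}}$ converges $m$-absolutely, i.e. $\prod\limits_{n=1}^{\infty}\left|\frac{b_{n}}{b_{n+1}}\right|_{\times}$ converges.

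First I would observe that, since $(b_{n})$ is strictly increasing with limit $1$, we have $0<b_{n}<b_{n+1}<1$ for every $n$, so each ratio $\frac{b_{n}}{b_{n+1}}$ lies in $(0,1)$ and therefore $\left|\frac{b_{n}}{b_{n+1}}\right|_{\times}=\frac{b_{n+1}}{b_{n}}$. Next I would compute the partial products and let them telescope: $\prod\limits_{k=1}^{n}\left|\frac{b_{k}}{b_{k+1}}\right|_{\times}=\prod\limits_{k=1}^{n}\frac{b_{k+1}}{b_{k}}=\frac{b_{n+1}}{b_{1}}$. Since $b_{n+1}\rightarrow 1$, this tends to $\frac{1}{b_{1}}\in(0,\infty)$ as $n\rightarrow\infty$, so $\prod\limits_{n=1}^{\infty}\left|\frac{b_{n}}{b_{n+1}}\right|_{\times}$ converges; that is, $\prod\limits_{n=1}^{\infty}\frac{b_{n}}{b_{n+1}}$ converges $m$-absolutely. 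Together with the hypothesis $b_{n}\rightarrow 1$, this shows $(b_{n})$ is a sequence of the type considered in Theorem 5.9.

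Finally I would invoke Theorem 7.4 with this $(b_{n})$ to conclude that the power product $\prod\limits_{k=1}^{\infty}b_{k}^{x^{k}}$ converges for every real $x\in[-1,1)$. There is no genuine obstacle in this argument; the only point worth care is the identification of the multiplicative modulus on the ratios $\frac{b_{n}}{b_{n+1}}$ — it is precisely the monotonicity of $(b_{n})$ that forces every such ratio below $1$, which is what makes the telescoping partial product monotone, bounded, and convergent in one line. Without monotonicity, the $m$-absolute convergence of $\prod\frac{b_{n}}{b_{n+1}}$ would require a separate argument rather than this telescoping observation.
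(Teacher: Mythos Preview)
Your argument is correct and follows essentially the same route as the paper: both verify the hypotheses of Theorem 5.9 by observing that monotonicity forces $\left|\frac{b_{n}}{b_{n+1}}\right|_{\times}=\frac{b_{n+1}}{b_{n}}$ and then telescoping; the paper bounds the tail product $\prod_{k=m}^{n}\left|\frac{b_{k}}{b_{k+1}}\right|_{\times}=\frac{b_{n+1}}{b_{m}}<\frac{1}{b_{m}}\to 1$ and appeals to the Cauchy criterion (Theorem~1.3), whereas you compute the full partial product $\frac{b_{n+1}}{b_{1}}\to\frac{1}{b_{1}}$ directly---a cosmetic difference only. One minor correction: the result you are invoking is Theorem~7.3 (the corollary itself is numbered 7.4), so adjust the cross-reference.
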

\begin{proof}
\ For $n>m$,
\begin{eqnarray*}
1 & \leq & \Big|\frac{b_{m}}{b_{m+1}}\Big|_{\times} \Big|\frac{b_{m+1}}{b_{m+2}}\Big|_{\times}...\Big|\frac{b_{n}}{b_{n+1}}\Big|_{\times} \\
&=& \frac{b_{m+1}}{b_{m}}\frac{b_{m+2}}{b_{m+1}}...\frac{b_{n+1}}{b_{n}}= \frac{b_{n+1}}{b_{m}}< \frac{1}{b_{m}},
\end{eqnarray*}
and $\frac{1}{b_{m}}\rightarrow 1$ as $m\rightarrow \infty$. Therefore, by Theorem 1.3, $\prod\limits^{\infty}_{n=1}|\frac{b_{n}}{b_{n+1}}|_{\times}$ converges. Now, the conclusion follows from Theorem 7.3.
\end{proof}
\ So, there is a need to improve Theorem 5.9. The next result may remind Cauchy product for sequences.
\begin{thm}
\ Let $(a_{n})_{n=1}^{\infty}$ and $(b_{n})_{n=1}^{\infty}$ be two sequences of positive reals such that $\prod\limits^{\infty}_{n=0} a_{n}$ and $\prod\limits^{\infty}_{n=0} b_{n}$ converge to $A$ and $B$. For each $n$, let $ c_{n}=a_{0}b_{n}a_{1}b_{n-1}...a_{n}b_{0}$, $A_{n}=\prod\limits^{n}_{k=0} a_{k}$, $B_{n}=\prod\limits^{n}_{k=0} b_{k}$ and $C_{n}=\prod\limits^{n}_{k=0} c_{k}$. Then ${A_{n}}^{\frac{1}{n}} \rightarrow 1$, ${B_{n}}^{\frac{1}{n}} \rightarrow 1$, ${C_{n}}^{\frac{1}{n}} \rightarrow AB$ and ${c_{n}}^{\frac{1}{n}} \rightarrow 1$ as $n\rightarrow\infty$.
\end{thm}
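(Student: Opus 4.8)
The plan is to exploit the commutativity of multiplication in $(0,\infty)$, which makes the multiplicative ``Cauchy product'' collapse to something far simpler than its additive analogue. The first step is the elementary observation that the $2(n+1)$ factors $a_0,b_n,a_1,b_{n-1},\dots,a_n,b_0$ may be reordered freely, so that
\[
 c_n = a_0 b_n a_1 b_{n-1}\cdots a_n b_0 = (a_0 a_1\cdots a_n)(b_0 b_1\cdots b_n) = A_n B_n
\]
for every $n$. Hence the whole statement reduces to properties of the two convergent sequences $(A_n)$ and $(B_n)$. By hypothesis $A_n\to A$ and $B_n\to B$ with $A,B>0$, so by continuity of multiplication $c_n=A_nB_n\to AB$.

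Next I would dispose of the three ``$n$-th root tends to $1$'' assertions by a squeeze. Since $A_n\to A>0$, the sequence $(A_n)$ is bounded, say $0<m\le A_n\le M<\infty$ for all $n$; then $m^{1/n}\le A_n^{1/n}\le M^{1/n}$ and both bounds tend to $1$, so $A_n^{1/n}\to1$. The same argument gives $B_n^{1/n}\to1$, and therefore $c_n^{1/n}=(A_nB_n)^{1/n}=A_n^{1/n}B_n^{1/n}\to1$. (Alternatively one could cite Lemma 2.5 with $t_n\equiv1$ applied to $(a_n)$ and to $(b_n)$, both of which tend to $1$ since the products converge.)

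For the remaining claim $C_n^{1/n}\to AB$ I would invoke the Ces\`aro-type geometric-mean result already established. Since $c_n\to AB>0$, Lemma 2.5 with $t_n\equiv1$ (so $\sum t_n=+\infty$) applied to the sequence $(c_n)_{n\ge1}$ yields $(c_1c_2\cdots c_n)^{1/n}\to AB$; writing $C_n=c_0\,(c_1c_2\cdots c_n)$ and using $c_0^{1/n}\to1$, we conclude $C_n^{1/n}=c_0^{1/n}(c_1\cdots c_n)^{1/n}\to AB$.

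I do not anticipate a real obstacle. The only point demanding any care is index bookkeeping: the products here start at $k=0$, whereas Lemma 2.5 is indexed from $k=1$, but the discrepancy is absorbed by the factors $a_0^{1/n},b_0^{1/n},c_0^{1/n}$, each of which tends to $1$. The genuine content of the theorem is just the identity $c_n=A_nB_n$, and this is precisely where the multiplicative framework is tamer than the classical additive Cauchy product.
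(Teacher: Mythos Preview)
Your proof is correct and follows essentially the same approach as the paper's. The paper's argument is terser but rests on the identical key identity (stated there as $C_{n}=A_{0}A_{1}\cdots A_{n}\,B_{0}B_{1}\cdots B_{n}$, which is equivalent to your $c_{n}=A_{n}B_{n}$) and then invokes Lemma~2.5 for all four limits; your explicit squeeze for $A_{n}^{1/n}$, $B_{n}^{1/n}$, $c_{n}^{1/n}$ is a cosmetic variant of the same idea.
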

\begin{proof}
\ By Theorem 1.3, $a_{n} \rightarrow 1$, $b_{n} \rightarrow 1$ as $n\rightarrow\infty$. By Lemma 2.5, all conclusions follow, because $C_{n}=A_{0}A_{1}...A_{n}B_{0}B_{1}...B_{n}$, and $A_{n} \rightarrow A$, $B_{n} \rightarrow B$ as $n\rightarrow\infty$.
\end{proof}
\section{Conclusion}
\ The problem in using transformation technique lies in guessing methods for transformation. For example, transformations were not used since nineteenth century to define multiplicative modulus function from additive modulus function. One can guess that a transformation can be applied to derive multiplicative modulus function from additive modulus function, only  after introducing the concept of multiplicative modulus function. So, separate techniques should be developed for infinite products. Theory of infinite products is also applicable like theory of infinite series. This is the need for development of theory of infinite products. Corollary 7.4 is simple to write examples. But it is not sufficient. So, Theorem 5.9 should be improved in all possible ways, but subject to simple conditions.

\textbf{Declaration:} There is no conflict of interest.

\subsection*{Acknowledgement}
Dr. C. Ganesa Moorthy (Professor, Department of Mathematics, Alagappa University, Karaikudi- 630003, INDIA) gratefully acknowledges the joint financial support of RUSA-Phase 2.0 grant sanctioned vide letter No.F 24-51/2014-U, Policy (TN Multi-Gen), Dept. of Edn. Govt. of India, Dt. 09.10.2018, UGC-SAP (DRS-I) vide letter No.F.510/8/DRS-I/2016 (SAP-I) Dt. 23.08.2016 and DST (FIST - level I) 657876570 vide letter No.SR/FIST/MS-I/2018-17 Dt. 20.12.2018.

\end{document}